\newcommand{ \inj}{ \hookrightarrow}
\DeclareMathOperator{\Hom}{Hom}
\DeclareMathOperator{\ext}{Ext}
\DeclareMathOperator{\Pic}{Pic}
\newcommand{\Z}{\mathbb{Z}}
\newcommand{\F}{\mathbb{F}_2}
\newcommand{\M}{\mathbb{M}_2}
\newcommand{\C}{\mathbb{C}}
\newcommand{\E}{\mathcal{E}}
\newcommand{\st}{\mathrm{Stab}}
\newcommand{\eq}{\cong}
\newcommand{\steq}{\simeq}
\newcommand{\A}{\mathcal{A}}
\newcommand{\At}{A/\tau}
\newcommand{\Amod}{{ }_{A}\mathbf{Mod}}
\newcommand{\Amodf}{\Amod^{\textnormal{f}}} 			
\newcommand{\Atmod}{{ }_{A/\tau}\mathbf{Mod}} 
\newcommand{\modmod}{/ \hspace{-0.07cm}/}
\theoremstyle{definition}
\newtheorem{de}{Definition}[section]
\theoremstyle{plain}
\newtheorem{thm}[de]{Theorem}
\newtheorem{lemma}[de]{Lemma}
\newtheorem{pro}[de]{Proposition}
\newtheorem{cor}[de]{Corollary}
\newtheorem*{thm*}{Theorem}
\newtheorem*{lemma*}{Lemma}
\newtheorem*{pro*}{Proposition}
\newtheorem*{cor*}{Corollary}
\theoremstyle{remark}
\newtheorem{rk}[de]{Remark}
\newtheorem{ex}[de]{Example}
\newcommand{\quotient}[2]{{\raisebox{.2em}{$#1$}\left/\raisebox{-.2em}{$#2$}\right.}}
\newcommand{\lto}{\ensuremath{ \mathrel{ \mkern1.5mu\textrm{\arro\symbol{71}} \mkern-2.5mu\textrm{\arro\symbol{71}} \mkern-1.1mu\textrm{\arro\symbol{65}} \mkern+1mu } \xspace }}	
\newcommand{\ltooo}{\ensuremath{ \mathrel{ \mkern1.5mu\textrm{\arro\symbol{71}} \mkern-2.5mu\textrm{\arro\symbol{71}} \mkern-2.5mu\textrm{\arro\symbol{71}} \mkern-2.5mu\textrm{\arro\symbol{71}} \mkern-1.1mu\textrm{\arro\symbol{65}} \mkern+1mu } \xspace }}	
\newcommand{\ltoback}{\ensuremath{ \mathrel{ \mkern+3mu\textrm{\arro\symbol{71}} \mkern-13mu\textrm{\arro\symbol{68}}  \mkern-0.5mu\textrm{\arro\symbol{71}} \mkern-4.5mu\textrm{\arro\symbol{71}} \mkern+4mu } \xspace }}	
\newcommand{\adj}{ \ensuremath{ \mathrel{ \mkern+1.1mu \raise2.45pt\vbox{ \hbox{\lto}} \mkern-29mu  \raise+1.1pt\vbox{ \hbox{ \scalebox{0.45}{$\perp$} }} \mskip-29mu\raise-2.45pt\hbox{\ltoback} \mkern+2mu } \xspace } } 
\newcommand{\lmapsto}{\ensuremath{\mathrel{ \mkern+3.4mu \mapsfromchar \mkern-6mu\textrm{\arro\symbol{71}} \mkern-1.1mu\textrm{\arro\symbol{71}} \mkern-4mu\textrm{\arro\symbol{65}} } \mkern+1mu} \xspace }	
\newcommand{\surj}{\ensuremath{\mathrel{ \mkern1.5mu\textrm{\arro\symbol{71}} \mkern-1.1mu\textrm{\arro\symbol{71}} \mkern-8mu\textrm{\arro\symbol{65}} \mkern-11mu\textrm{\arro\symbol{65}} } \mkern+1mu } \xspace }	
\mathchardef\dashmod="2D
\DeclareMathOperator{\Sq}{Sq}
\newcommand{\cl}{\mathrm{cl}}
\newcommand{\op}{\mathrm{op}}
\newcommand{\ko}{\mathit{ko}}
\title{The Picard group of motivic $\A(1)$}
\author{Bogdan Gheorghe}
\email{gheorghebg@wayne.edu}
\author{Daniel C. Isaksen}
\email{isaksen@wayne.edu}
\author{Nicolas Ricka}
\email{ricka@wayne.edu}
\address{Department of Mathematics, Wayne State University \\
 Detroit, MI 48202}
\keywords{Picard group, stable module category, motivic homotopy theory, Steenrod algebra}
\subjclass[2010]{14F42, 20G05, 14C22}
\begin{document}

\begin{abstract}
We show that the Picard group $\Pic(\A(1))$
of the stable category of modules over $\C$-motivic $\A(1)$
is isomorphic to $\Z^4$.
By comparison, the Picard group of classical
$\A(1)$ is $\Z^2 \oplus \Z/2$.
One extra copy of $\Z$ arises from the motivic bigrading.
The joker is a well-known exotic element of order $2$ 
in the Picard group of 
classical $\A(1)$. 
The $\C$-motivic joker has infinite order.
\end{abstract}

\maketitle

\section{Introduction}

\subsection{The Picard group of $\A(1)$}

Let $\A(1)^{\cl}$ be the subalgebra of the classical mod 2 Steenrod algebra 
generated by $\Sq^1$ and $\Sq^2$.
The stable module category $\st(\A(1)^{\cl})$
is the category whose objects are the finitely generated graded
left $\A(1)^{\cl}$-modules, and whose morphisms are the usual
$\A(1)^{\cl}$-module maps, modulo maps that factor through 
projective $\A(1)^{\cl}$-modules.

The stable module category $\st(\A(1)^{\cl})$ is equipped with a tensor
product over $\F$.  The unit of this pairing is $\F$, and
an object $M$ of $\st(\A(1)^{\cl})$ is invertible if there exists 
another $\A(1)^{\cl}$-module $N$ such that 
$M \otimes_{\F} N$ is stably isomorphic to $\F$.
The Picard group $\Pic(\A(1)^{\cl})$ is the set of invertible
stable isomorphism classes, with group operation given by tensor
product over $\F$.

$\ext$ groups over $\A(1)^{\cl}$ are invariants
of stable isomorphism classes of $\A(1)^{\cl}$-modules.
Thus, $\st(\A(1)^{\cl})$ is the natural category on which
$\ext$ groups over $\A(1)^{\cl}$ are defined.
These $\ext$ groups are of topological 
interest because of the Adams spectral sequence
$$ E_2 = \ext_{\A(1)^{\cl}}( H\F^{*}(X), \F) \Rightarrow \ko_{*}(X)^{\wedge}_2$$ 
converging to $2$-completed $\ko$-homology.

Adams and Priddy computed $\Pic(\A(1)^{\cl})$ while studying infinite loop
space structures on the classifying space $BSO$ 
\cite[Section 3]{AP76}.
They found that the Picard group is isomorphic to $\Z^2 \oplus \Z/2$.
One copy of $\Z$ comes from the grading; one can shift the grading
on $A(1)^{\cl}$-modules to obtain ``new'' $\A(1)^{\cl}$-modules.
The other copy of $\Z$ comes from the algebraic loop functor
that is a formal part of the stable module category;
see Definition \ref{de:omega} below for more details.

The copy of $\Z/2$ in $\Pic(\A(1)^{\cl})$ is the most interesting part
of the calculation.  It is exotic in the sense that it doesn't follow
from the formal theory of stable module categories and Picard groups.  The
copy of $\Z/2$ is generated by the joker $J$ shown in 
Figure \ref{fig:joker}.  It turns out that $J \otimes_{\F} J$ is stably
isomorphic to $\F$, so $J$ has order $2$ in $\Pic(\A(1)^{\cl})$.

\subsection{The motivic setting}

There has been much recent work on the computational side of 
motivic homotopy theory.  In particular, the algebraic properties
of the motivic Steenrod algebra have come under close scrutiny.
As part of this program, it is natural to ask about the Picard
group of the motivic version of $\A(1)$.
The goal of this article is to carry out this computation for
$\C$-motivic $\A(1)$, which is the simplest motivic case.

The fundamental difficulty in the motivic situation is that the
ground ring $\M$ is not a field.  Rather, it is a graded polynomial
ring $\F[\tau]$.  Therefore, we must be careful to insert
$\M$-freeness hypotheses at the appropriate places.

We will show that $\Pic( \A(1) )$ is isomorphic to 
$\Z^4$.  Two copies of $\Z$ arise from the motivic bigrading,
and one copy of $\Z$ comes from the algebraic loop functor.
This leaves one copy of $\Z$, which is generated by the
motivic joker $J$ (see Figure \ref{fig:joker}).  It turns out that
the motivic joker has infinite order.  The order of the motivic
joker is the essential new aspect of the motivic calculation.

There are two main ideas in the proof.
First,
the Hopf algebra $\A(1)/\tau$ is isomorphic to the group algebra
of the dihedral group $D_8$ of order $8$, so $\A(1)/\tau$ is well-understood.
In particular, the Picard group of $\A(1)/\tau$ is known.

Second,
consider the functor that takes an $\A(1)$-module
$M$ to its quotient $M/\tau$.  In general, quotienting is not an exact
functor.  However, it turns out to be well-behaved for $\A(1)$-modules
that are $\M$-free.  Using this well-behaved functor, we can pull back information about the Picard group of $\A(1)/\tau$ to information
about the Picard group of $\A(1)$.

The difference between the $\C$-motivic and classical Picard groups
is a familiar one.  Frequently, motivic computations are larger than
classical ones.  However, they are also often more regular.  This
situation is clearly displayed in our work, where the motivic Picard
group is free, while the classical Picard group has torsion.

We do not consider the Picard group of motivic $\A(1)$
over other base fields.  The $\C$-motivic phenomena described in this
paper will occur over other base fields, but it is possible that additional
complications arise.

Our computation of the Picard group of motivic $\A(1)$ is potentially
useful for the following problem.  From our perspective, the most
essential property of the $\C$-motivic spectrum $\ko$
is that its cohomology is isomorphic to $\A\modmod\A(1)$ \cite{Is10}.
One might ask whether such a $\C$-motivic spectrum is unique.
Suppose that $X$ and $Y$ 
are $\C$-motivic spectra whose cohomology modules are both isomorphic
to $\A\modmod\A(1)$.  In order to construct an equivalence between $X$ and $Y$,
one could compute the maps between $X$ and $Y$ via
the motivic Adams spectral sequence, whose $E_2$-page takes the form
$\ext_{\A} (\A\modmod\A(1), \A\modmod\A(1) )$.  By a standard change of rings
theorem, this $E_2$-page is equal to
$\ext_{\A(1)} (\M, \A\modmod\A(1) )$.  
It is possible that this Adams spectral sequence is analyzable,
because $\A\modmod\A(1)$ probably splits as an $\A(1)$-module into
summands that belong to the Picard group.
We leave the details for future work.


\section{Stable module theory of finite motivic Hopf algebras}

\subsection{Finite motivic Hopf algebras}

We use the same notation and framework as in \cite{Is14}.
We work in the $\C$-motivic setting at the prime 2. The base ring is the motivic cohomology 
$H^{\ast,\ast}( S^{0,0}; \F) $
of the sphere spectrum.
We write $\M$ for this ring; it is isomorphic to 
$\F[\tau]$ with $\tau$ in bidegree $(0,1)$. 
Objects are bigraded in the form $(s,w)$, 
where $s$ corresponds to the classical internal degree and $w$ is the motivic weight. 

Let $\A$ be the $\C$-motivic Steenrod algebra at the prime 2. This Hopf algebra over $\M$ was first computed in \cite{Voe03}, and its structure is thoroughly understood. 

A fundamental difference between the $\C$-motivic and the classical
situations is that the base ring $\M$ is not a field.  
Therefore, we must add freeness over $\M$ as a hypothesis
in Definition \ref{defn:mot-Hopf} below.

\begin{de}
\label{defn:mot-Hopf}
A \emph{finite motivic Hopf algebra} is a cocommutative bigraded Hopf algebra over $\M$ that is finitely generated and free as an $\M$-module.
\end{de}

\begin{ex}
Recall the subalgebras $\A(n)^{\cl}$ and $\E(n)^{\cl}$ of the classical
Steenrod algebra \cite{AM74}.  These subalgebras have $\C$-motivic analogues,
and they are finite motivic Hopf algebras.
\end{ex}

Throughout the article,
$A$ will represent an arbitrary finite motivic Hopf algebra,
while $\A$ represents the $\C$-motivic Steenrod algebra.
Note that $\A$ is not finitely generated as an $\M$-module.
However, we are primarily interested in the subalgebra $\A(1)$ of
$\A$ generated by $\Sq^1$ and $\Sq^2$, and $\A(1)$ is
a finitely generated $\M$-module.

\begin{lemma}
\label{lem:Aproj-Mfree}
Suppose that $A$ is a finite motivic Hopf algebra.
If $M$ is a finitely generated projective $A$-module, then
it is a finitely generated free $\M$-module.
\end{lemma}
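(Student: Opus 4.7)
The plan is to reduce the statement about $A$-projective modules to a statement about bigraded $\M$-projective modules, and then to invoke a graded structure result over the graded principal ideal domain $\M = \F[\tau]$.

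First, I would observe that $M$ is finitely generated over $\M$: since $A$ is finitely generated over $\M$ by hypothesis, and $M$ is finitely generated over $A$, the two finite generations compose to show that $M$ is finitely generated over $\M$. Second, I would transfer projectivity from $A$ to $\M$. Because $M$ is projective over $A$, there exists a bigraded $A$-module $N$ such that $M \oplus N$ is isomorphic, as a bigraded $A$-module, to a finitely generated bigraded free $A$-module $F$. Forgetting the $A$-action, this is still an isomorphism of bigraded $\M$-modules; by hypothesis $A$ is free over $\M$, so $F$ is free over $\M$ as well. Thus $M$ is a bigraded direct summand of a finitely generated bigraded free $\M$-module, and in particular a finitely generated bigraded projective $\M$-module.

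Third, I would argue that any such module over $\M$ is automatically free. The ring $\M = \F[\tau]$ is a bigraded principal ideal domain whose only nonzero bihomogeneous ideals are the $(\tau^n)$, with unique maximal bihomogeneous ideal $(\tau)$ and residue field $\F$. A bigraded $\M$-module decomposes as $\bigoplus_s M^s$, where each $M^s = \bigoplus_w M_{s,w}$ is a singly graded $\M$-module in the motivic weight, and the classical structure theorem for finitely generated graded modules over $\F[\tau]$ splits each $M^s$ as a direct sum of shifted copies of $\M$ and shifted cyclic torsion modules $\M/\tau^n$. Projectivity forbids the torsion summands, so each $M^s$ is bigraded free, hence so is $M$. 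An equivalent route is to lift a bihomogeneous $\F$-basis of $M/\tau M$ to $M$ via graded Nakayama, producing a bigraded surjection from a finitely generated bigraded free $\M$-module onto $M$, which projectivity splits, followed by a rank comparison after reducing modulo $\tau$.

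The only real subtlety is to verify that all constructions — the splitting of $M \oplus N \cong F$, the lift of the basis from $M/\tau M$, and the decomposition into cyclics — can be chosen to respect the bigrading. This is routine, since $\tau$ and its powers are bihomogeneous and every ideal and map in sight can be made bihomogeneous from the start. I do not expect a serious obstacle; the lemma is essentially the bigraded analogue of the classical fact that finitely generated projective modules over $\F[\tau]$ are free.
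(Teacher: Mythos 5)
Your argument follows the paper's proof essentially step for step: realize $M$ as a bigraded $\M$-summand of a finitely generated free $\M$-module (by forgetting the $A$-structure on a free $A$-module containing $M$ as a summand), then invoke the structure theorem for finitely generated graded modules over the graded PID $\M=\F[\tau]$ to rule out torsion summands. The paper states this more tersely and does not belabor the bigrading, but the ideas are the same; the Nakayama alternative you sketch is not in the paper but also works.
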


\begin{proof}
Suppose that $M$ is a finitely generated projective $A$-module.
Then $M$ is a summand of free $A$-module $F$.  The module $F$ 
is free and finitely generated as an $\M$-module,
since $A$ is free and finitely generated as an $\M$-module.
Therefore, as an $\M$-module, $M$ is a summand of a free $\M$-module.
This shows that $M$ is a finitely generated projective $\M$-module.

It remains to show that finitely generated projective $\M$-modules
are free.
The ring $\M$ is a graded principal ideal domain whose graded ideals are
of the form $(\tau^k)$.
Therefore, 
a finitely generated $\M$-module is a direct sum of a free module and 
cyclic modules of the form $\M/ \tau^k$.
It follows that finitely generated projective $\M$-modules 
are the same as finitely generated free $\M$-modules.
\end{proof}

\subsection{The stable category}

We now recall the basic framework of stable module categories,
as applied to a finite motivic Hopf algebra $A$. The stable category of modules over a group algebra is a classical construction in group representation 
theory \cite[Section 2.6]{CTV03}. In the case of a finite
motivic Hopf algebra $A$, the theory is similar to the case when $A$ is a finite dimensional graded connected Hopf algebra over a field, for which a good reference is
\cite[Section 14.1]{Mar83}. However, since the base ring $\M$ of a finite motivic Hopf algebra is not a field, one has to pay attention to the underlying theory of $\M$-modules, and add $\M$-freeness hypotheses when appropriate.

\begin{de}
\label{defn:Amod}
Let $\Amod$ be the category of bigraded finitely generated left $A$-modules, and let $\Amodf$ be the full subcategory of 
$\Amod$ consisting of left $A$-modules that are free over $\M$.
\end{de}

\begin{de}
\label{defn:Stab}
Let $\st(A)$ be the category whose objects are the same as in $\Amodf$, and whose morphisms are given by
$$\Hom_{\st(A)}(M,N) = \quotient{\Hom_{A}(M,N)}{ \sim},$$ 
where two morphisms $f$ and $g$ are equivalent if 
their difference factors through a projective $A$-module. 
\end{de}

If $M$ and $N$ are objects of $\Amodf$, then we write
$M \eq N$ if $M$ and $N$ are stably equivalent,
i.e., if they are isomorphic in the stable category $\st(A)$.

Our main interest is the Picard group $\Pic(A)$ 
of the stable category of some finite motivic Hopf algebra $A$. 
We will see below in Remark \ref{rk:Mfree} that 
all representatives of
every element in $\Pic(A)$ are actually free over $\M$ 
and thus captured by $\st(A)$. 
In other words, the assumptions about $\M$-freeness in
Definitions \ref{defn:Amod} and \ref{defn:Stab} are no loss
of generality.

In the same vein, it is essential that we use constructions
that preserve $\M$-free $A$-modules.
For example for any finitely generated 
$A$-module $M$ (not necessarily $\M$-free), the algebraic loop $\Omega M$ (defined below in Definition \ref{de:omega}) is free over $\M$, as it is the kernel of a map from a finitely generated free $\M$-module.

The stable category $\st(A)$ is naturally enriched over $A$-modules, since the equivalence relation on morphisms is $A$-linear. 
The category $\st(A)$ has additional structure that we describe next.

\begin{pro}
\label{pro:stA-symm-monoidal}
The category $\st(A)$ is a closed symmetric monoidal category.
\end{pro}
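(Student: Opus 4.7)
My plan is to construct the closed symmetric monoidal structure on $\Amodf$ first using the Hopf algebra structure on $A$, then descend it to the stable category $\st(A)$.

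At the level of $\Amodf$, I would define $M \otimes N := M \otimes_\M N$ with diagonal $A$-action using the coproduct of $A$, and internal hom $\sthom_\M(M,N)$ with $A$-action using the antipode. The unit is $\M$ with the trivial $A$-action via the counit. Cocommutativity and coassociativity of the coproduct give symmetry and associativity. Since both functors are defined on $\M$-modules, they preserve $\M$-freeness (tensor of free $\M$-modules is free; $\sthom_\M(M,N)$ is free when $M$ and $N$ are finitely generated free over the graded PID $\M$). Finite generation as $A$-modules follows from finite generation as $\M$-modules, using Lemma \ref{lem:Aproj-Mfree}'s setup. The usual tensor-hom adjunction at the $\M$-module level promotes to $A$-module maps on both sides.

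The key step is descent to $\st(A)$. This reduces to showing that the bifunctor descends along the equivalence relation, i.e., that if $f : M \to M'$ factors through a projective $A$-module, then for any $N \in \Amodf$ the map $f \otimes_\M N : M \otimes_\M N \to M' \otimes_\M N$ also factors through a projective $A$-module; and symmetrically. For this I would prove the following key lemma: if $P$ is a finitely generated projective $A$-module and $N$ is a finitely generated free $\M$-module equipped with any $A$-action, then $P \otimes_\M N$ is a projective $A$-module. The standard Hopf algebra argument applies: the $\M$-linear map
\[
A \otimes_\M N \longrightarrow A \otimes_\M N_{\mathrm{triv}},\qquad a \otimes n \longmapsto \sum a_{(1)} \otimes S(a_{(2)}) n,
\]
with $\M$-linear inverse $a \otimes n \mapsto \sum a_{(1)} \otimes a_{(2)} n$, is an isomorphism of $A$-modules from the diagonal action to the action concentrated on the first factor. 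Since $N$ is $\M$-free of finite rank $r$, this shows $A \otimes_\M N$ is free of rank $r$ as an $A$-module. Passing to summands then shows that $P \otimes_\M N$ is projective. The $\M$-freeness of $N$ is used crucially here; this is exactly why $\Amodf$, and not $\Amod$, is the right category.

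For the closed structure, I would argue analogously that $\sthom_\M(N, P)$ is projective when $P$ is projective and $N \in \Amodf$. A parallel Hopf-theoretic computation identifies $\sthom_\M(N, A)$ with $N^{\vee} \otimes_\M A$, and finite-rank $\M$-freeness of $N$ then reduces projectivity of $\sthom_\M(N, P)$ to the tensor case already handled. With these descent facts in hand, the tensor-hom adjunction on $\Amodf$ passes to $\st(A)$ because stable morphisms on either side are quotients by the same class of maps (those factoring through projectives), and the adjunction isomorphism is compatible with these ideals.

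The main obstacle is the projectivity preservation step, and in particular verifying that the Hopf-theoretic trick works in the motivic graded setting with $A$-action on the $\M$-free module $N$ taken into account properly. Once that lemma is established, coherence, symmetry, associativity, unit axioms, and the adjunction all transport formally from $\Amodf$ to the quotient $\st(A)$.
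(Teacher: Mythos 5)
Your proof is correct and is essentially the standard argument that the paper cites from Margolis \cite[Proposition 15.2.19]{Mar83}: diagonal $A$-action on $M \otimes_{\M} N$ via the coproduct, internal hom via the antipode, and the shearing isomorphism to show that tensoring (or internal hom) against an $\M$-free module preserves projectivity, which lets the monoidal structure descend to $\st(A)$. The paper's proof consists of this citation plus the single motivic observation that the tensor product of $\M$-free modules is $\M$-free, which you also identify; so you and the paper take the same route, with yours simply unpacking the cited reference.
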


\begin{proof}
This is a standard result from 
the theory of stable modules; see \cite[Proposition 15.2.19]{Mar83} for example.
The only additional observation is that 
the tensor product of $\M$-free modules is $\M$-free. 
\end{proof}

\subsection{Picard groups}

\begin{de}
\label{defn:Pic}
Let $A$ be a finite motivic Hopf algebra.
The Picard group $\Pic(A)$ is the group (of isomorphism classes) of invertible objects of $\st(A)$ under the monoidal structure, \emph{i.e.,} the group of stably invertible modules with the tensor product as group law. 
\end{de}

Note that $\Pic(A)$ is an abelian group because $\st(A)$ is symmetric monoidal.

\begin{rk}
\label{rk:Mfg}
In Definition \ref{defn:Pic},
we are only considering finitely generated $A$-modules.
This is no loss of generality because every invertible
object must be finitely generated.  This follows from
\cite[Proposition 2.1.3]{MS14}, for example.
\end{rk}

\begin{rk}
\label{rk:Mfree}
In Definition \ref{defn:Pic}, we have 
defined the Picard group using only $A$-modules that are
$\M$-free.  In fact, if $M$ and $N$ are arbitrary finitely generated
$A$-modules such that $M \otimes N$ is stably equivalent to $\M$,
then $M$ and $N$ must in fact be $\M$-free.
In other words, there is no harm in considering only $\M$-free 
modules in the Picard group.
For if $M \otimes N$ is isomorphic to $\M \oplus P$ for some
projective $A$-module $P$, then
$P$ is $\M$-free by Lemma \ref{lem:Aproj-Mfree}.
Therefore, $M \otimes N$ is $\M$-free, and $M$ and
$N$ are $\M$-free as well.
\end{rk}

\begin{de}
Denote the $\M$-linear dual functor by
$$D : \Amod^{\op} \lto \Amod \colon M \lmapsto DM = \Hom_{\M}(M, \M).$$
\end{de}

\begin{lemma}
\label{lem:stabdual}
The $\M$-linear dual functor $D$ induces a functor
$$D : \st(A)^{\op} \lto \st(A).$$
\end{lemma}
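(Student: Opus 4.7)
The plan is to verify two things: first, that $D$ sends objects of $\st(A)$ to objects of $\st(A)$ (that is, preserves finitely generated $\M$-free $A$-modules); and second, that $D$ sends maps factoring through projective $A$-modules to maps factoring through projective $A$-modules, so that it descends to the quotient category.

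For the first point, if $M$ is finitely generated and free over $\M$, then $DM = \Hom_{\M}(M,\M)$ is a finitely generated free $\M$-module of the same rank (this is where $\M$-freeness of $M$ is essential, since otherwise torsion in $M$ would not be seen by $DM$). The $A$-module structure on $DM$ is defined in the standard way via the antipode of the Hopf algebra $A$, using that $A$ is cocommutative. Functoriality on morphisms is automatic from the functoriality of $\Hom_{\M}(-,\M)$.

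For the second point, the key claim is that $D$ sends finitely generated projective $A$-modules to finitely generated projective $A$-modules. By Lemma~\ref{lem:Aproj-Mfree}, any such projective $P$ is a summand of a finitely generated free $A$-module $A^n$, so it suffices to show that $DA$ is a projective $A$-module. Here one uses that a finite motivic Hopf algebra $A$ is a Frobenius algebra over $\M$: since $A$ is a finitely generated free $\M$-module Hopf algebra, the standard argument (an integral for $A$ produces an isomorphism $DA \eq A$ of left $A$-modules, up to a shift in bidegree) goes through, with $\M$-freeness replacing the usual role of the base field. Granting this, $DP$ is a summand of $D(A^n) \eq A^n$ and is therefore projective. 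Now if $f - g = \beta \circ \alpha$ with $\alpha : M \lto P$ and $\beta : P \lto N$ for some projective $P$, then $D(f) - D(g) = D(\alpha) \circ D(\beta)$ factors through the projective $A$-module $DP$. Hence $D$ descends to a functor $\st(A)^{\op} \lto \st(A)$.

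The main obstacle is the Frobenius-type isomorphism $DA \eq A$ in the second point. This is well known for finite-dimensional Hopf algebras over a field, but here the base $\M$ is only a graded principal ideal domain. One must check that the existence of an integral and the resulting duality work $\M$-integrally; this is possible because $A$ is free of finite rank over $\M$, so the usual proofs adapt verbatim, with the bigrading accounted for by a shift.
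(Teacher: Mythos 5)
Your proof is correct in outline and arrives at the same crucial point as the paper---that $D$ sends projective $A$-modules to projective $A$-modules, which reduces to showing $DA$ is $A$-projective---but it justifies that key step by a different, stronger route. The paper observes (following Margolis, Theorem~12.2.9) that $DA$ is a retract of $DA \otimes A$, and that the shearing map of the Hopf algebra makes $DA \otimes A$ into a \emph{free} $A$-module; thus $DA$ is projective as a direct summand of a free module, with no need to identify $DA$ up to isomorphism. You instead assert a Frobenius isomorphism $DA \cong A$ (up to a bidegree shift) via integrals. That is indeed true here, and it gives a cleaner conclusion, but the assertion that the field-case integral arguments ``adapt verbatim'' over $\M = \F[\tau]$ glosses over a real point: the relevant statement over a general commutative base (Pareigis's theorem) says that for a Hopf algebra that is finitely generated projective over the base, the module of integrals is rank-one projective, and the algebra is Frobenius precisely when that projective is free. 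Over $\M$, freeness follows because $\M$ is a graded PID, but this deduction deserves to be made explicit rather than waved at. The paper's shearing-map argument is more economical precisely because it sidesteps the integral theory altogether; yours buys the stronger Frobenius self-duality at the cost of that extra justification. Either route is a valid proof once that gap is filled.
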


\begin{proof}
The dual functor $D$ 
preserves $\M$-freeness because $D$ is defined as $\Hom$ over $\M$.

It suffices to check that if $P$ is $A$-projective, 
then $DP$ is $A$-projective.
Since the dual respects direct sums, it is enough to show
that $DA$ is projective.
This follows as in \cite[Theorem 12.2.9]{Mar83} by considering
a retraction
\[
DA \lto DA \otimes A \lto DA
\]
and observing that the ``shearing map" 
\cite[Proposition 12.1.4]{Mar83}
makes $DA \otimes A$
into a free $A$-module.
\end{proof}

Lemma \ref{lemma:picarddual} shows that the dual functor $D$ corresponds
to inversion in the Picard group.

\begin{lemma} \label{lemma:picarddual}
Let $M$ be an $A$-module.
The evaluation morphism $DM \otimes M \stackrel{ev}{\lto} \M$ is a stable equivalence if and only if $M$ is invertible. 
In particular, 
the inverse of any element $[M]$ in $\Pic(A)$ is its dual $[DM]$. 

\begin{proof}
This fact is standard in stable module theory; see \cite[Proposition A.2.8]{HPS}. 
\end{proof}
\end{lemma}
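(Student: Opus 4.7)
The ``only if'' direction is formal: a stable equivalence $ev: DM \otimes M \to \M$ exhibits $DM$ as a stable inverse to $M$, so $M \in \Pic(A)$ with $[DM] = [M]^{-1}$.

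For the converse, suppose $M$ is invertible and fix a stable equivalence $\phi: M \otimes N \stackrel{\simeq}{\longrightarrow} \M$. My plan is to construct a stable equivalence $\psi: N \to DM$ for which $ev \circ (\psi \otimes \mathrm{id}_M)$ recovers $\phi$ (up to the symmetry isomorphism), and then conclude that $ev$ is a stable equivalence by two-out-of-three. I define $\psi: N \to DM$ to be the image of $\phi \circ \mathrm{swap}: N \otimes M \to \M$ under the adjunction bijection $\Hom_{\st(A)}(N \otimes M, \M) \cong \Hom_{\st(A)}(N, DM)$ provided by the closed structure of Proposition \ref{pro:stA-symm-monoidal} (identifying the internal Hom $\sthom(M, \M)$ with $DM$). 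By construction, $ev \circ (\psi \otimes \mathrm{id}_M) = \phi \circ \mathrm{swap}$.

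To see that $\psi$ is itself a stable equivalence I use the Yoneda lemma. Invertibility of $M$ means that $- \otimes M: \st(A) \to \st(A)$ is an equivalence of categories with quasi-inverse $- \otimes N$; in particular it is fully faithful. For each $X \in \st(A)$, the chain of natural bijections
\[
\Hom_{\st(A)}(X, N) \cong \Hom_{\st(A)}(X \otimes M, N \otimes M) \cong \Hom_{\st(A)}(X \otimes M, \M) \cong \Hom_{\st(A)}(X, DM),
\]
given by $- \otimes M$, post-composition with $\phi \circ \mathrm{swap}$, and the closed-structure adjunction, sends $\mathrm{id}_N$ to $\psi$. Thus the composite is post-composition with $\psi$, so by Yoneda $\psi$ is an isomorphism in $\st(A)$. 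Since $- \otimes M$ is an equivalence, $\psi \otimes \mathrm{id}_M$ is also a stable equivalence, and two-out-of-three applied to $ev \circ (\psi \otimes \mathrm{id}_M) = \phi \circ \mathrm{swap}$ forces $ev$ to be a stable equivalence.

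The main delicate point is verifying the Yoneda identification, namely that the displayed composite natural bijection really is post-composition with the map $\psi$ defined via the adjunction. This is a routine but careful unwinding of the adjunction definitions. Aside from this bookkeeping, the argument is purely formal and relies only on closedness of the monoidal structure together with the fact that invertibility makes $- \otimes M$ an autoequivalence of $\st(A)$.
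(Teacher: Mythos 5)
Your proof is correct. The paper itself gives no argument here; it simply cites Hovey--Palmieri--Strickland, Proposition A.2.8, which is precisely this formal statement about closed symmetric monoidal categories. Your argument is essentially the standard one behind that citation: pass from the witnessing equivalence $\phi: M \otimes N \to \M$ through the internal-Hom adjunction to a map $\psi: N \to DM$, show $\psi$ is an isomorphism via a chain of natural bijections and Yoneda, and then conclude by two-out-of-three on the identity $ev \circ (\psi \otimes \mathrm{id}_M) = \phi \circ \mathrm{swap}$. The only inputs are that $\st(A)$ is closed symmetric monoidal (Proposition \ref{pro:stA-symm-monoidal}), that $\sthom(M,\M)$ is identified with $DM$ in $\st(A)$ (which in this paper is what Lemma \ref{lem:stabdual} secures), and formal category theory; this is exactly the level of generality the cited reference works at, so you have supplied the proof the paper defers to. Two small remarks: you invoke that $-\otimes M$ is an equivalence to conclude $\psi \otimes \mathrm{id}_M$ is an isomorphism, but any functor preserves isomorphisms, so that step needs less than you use; and the ``delicate point'' you flag (that the composite natural bijection is post-composition with $\psi$) is immediate from Yoneda once naturality of each link in the chain is granted, so there is no genuine gap there.
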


We next describe the ``algebraic loop'' functor that is part
of the structure of a stable module category.

\begin{de} \label{de:omega}
Let $\Omega$ be the endo-functor of $\st(A)$ given by 
$$\Omega M = \ker( P \lto M)$$ 
where $P \lto M$ is any projective cover of $M$. 

For $k \geq 0$, define
$\Omega^k M$ inductively to be $\Omega (\Omega^{k-1} M)$.

For $k < 0$, define $\Omega^k M$ to be $D(\Omega^{-k} D M)$.
\end{de}

An immediate application of Schanuel's lemma 
shows that $\Omega M$ 
is independent of the choice of $P$.
Note that $\Omega M$ is $\M$-free because it is a subobject 
of $P$, and $P$ is $\M$-free by Lemma \ref{lem:Aproj-Mfree}.

\begin{lemma}
\label{lem:loop}
If $M$ is stably invertible, then so is $\Omega M$.
\end{lemma}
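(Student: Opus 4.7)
The plan is to exhibit an explicit stable inverse for $\Omega M$ when $M$ is stably invertible. If $N$ is a stable inverse of $M$, the natural candidate is $\Omega^{-1} N$, and both objects lie in $\Amodf$ because $\Omega$, $\Omega^{-1}$, and $D$ all preserve $\M$-freeness (as noted after Definition \ref{de:omega} and in Lemma \ref{lem:stabdual}). To verify that $\Omega^{-1} N$ is indeed an inverse, what is really needed is a compatibility of $\Omega$ with the tensor product on $\st(A)$.

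The key preliminary step is the following stable equivalence, valid for any $X, Y \in \Amodf$:
$$\Omega(X \otimes_{\M} Y) \simeq \Omega X \otimes_{\M} Y.$$
To establish this, start with a short exact sequence $0 \to \Omega X \to P \to X \to 0$ arising from a projective cover. Since $Y$ is $\M$-free and hence $\M$-flat, tensoring with $Y$ over $\M$ preserves exactness, yielding
$$0 \to \Omega X \otimes Y \to P \otimes Y \to X \otimes Y \to 0.$$
The middle term $P \otimes Y$ is $A$-projective: expressing $P$ as a summand of some $A^n$ reduces matters to showing that $A \otimes Y$ is a free $A$-module, which is precisely the shearing map argument invoked in the proof of Lemma \ref{lem:stabdual}. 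A standard application of Schanuel's lemma then identifies the kernel $\Omega X \otimes Y$ with $\Omega(X \otimes Y)$ in $\st(A)$. Dualizing (and using Lemma \ref{lem:stabdual}) gives the analogous equivalence $\Omega^{-1}(X \otimes Y) \simeq X \otimes \Omega^{-1} Y$.

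Combining these, and using the standard autoequivalence $\Omega \Omega^{-1} \simeq \mathrm{id}$ on $\st(A)$, one computes
$$\Omega M \otimes \Omega^{-1} N \simeq \Omega\bigl(M \otimes \Omega^{-1} N\bigr) \simeq \Omega \Omega^{-1}(M \otimes N) \simeq M \otimes N \simeq \M,$$
so $\Omega^{-1} N$ is a stable inverse of $\Omega M$, completing the proof. The main obstacle is the preliminary compatibility statement, and more precisely the verification that $P \otimes Y$ remains $A$-projective when only $Y$ is assumed $\M$-free; this is exactly where the $\M$-freeness hypothesis built into $\Amodf$ does essential work, since flatness is needed for the short exact sequence to survive tensoring and the shearing map is needed for the middle term to stay projective.
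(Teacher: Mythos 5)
Your proof is correct and rests on the same key fact the paper cites, namely that $\Omega$ commutes with tensoring by an object of $\Amodf$ up to stable equivalence; the paper phrases this as $\Omega M \cong \Omega\M \otimes M$ and refers to \cite[Proposition 2.10]{BrOssa}, while you prove the compatibility directly via Schanuel's lemma and the shearing map. Exhibiting $\Omega^{-1}N$ explicitly as the inverse is a cosmetic variant of the paper's conclusion that $\Omega M \cong \Omega\M \otimes M$ is a tensor product of two invertibles.
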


\begin{proof}
This is a standard part of the theory of stable modules. It follows from the fact that $\Omega M \cong \Omega \M \otimes M$, and that $\Omega \M$ is stably invertible; see \cite[Proposition 2.10]{BrOssa} for example.
\end{proof}

Lemma \ref{lem:loop} implies that there is a group homomorphism
$$\eta : \Z^3 \lto \Pic(A)$$
sending $(m,n,s)$ to the stable class of $ \Sigma^{m,n} \Omega^s \M$.
This homomorphism constructs
many elements in the Picard group of $A$.
Such elements exist for essentially formal reasons and do not really
reflect on the structure of the underlying algebra $A$.
In a sense, the image of $\eta$ consists of ``uninteresting"
invertible elements.

\section{$\tau$ quotients}

Suppose that $A$ is a finite motivic Hopf algebra. 
Then
$A / \tau = \F \otimes_{\M} A$ is a Hopf algebra.
Since $A / \tau$ is defined over a field $\F$, it is generally
easier to understand than $A$ itself.
We shall use a change of basis functor that relates our finite motivic Hopf algebra
$A$ to the Hopf algebra $A /\tau$.

\begin{pro} \label{pro:tauquotientexact}
Tensoring with the $\M$-module $\F$ induces a strongly monoidal functor
\begin{equation*}
\Amodf \stackrel{(-)/\tau}{\ltooo}  \Atmod^{\textnormal{f}}
\end{equation*}
that preserves exact sequences.
This functor passes to the stable category and thus induces a strongly monoidal functor
$$\st(A) \stackrel{(-)/\tau}{\ltooo}  \st(\At).$$

\begin{proof}
The unit $\M$ of the monoidal structure of $\Amodf$ is sent to the unit $\F$.   The functor is strongly monoidal since
\begin{equation*}
\quotient{M}{\tau} \otimes \quotient{N}{\tau} \cong \quotient{M \otimes N}{\tau};
\end{equation*}
this is just an application of commuting colimits.

Consider a short exact sequence in $\Amodf$.
The sequence is split exact on the underlying free $\M$-modules. 
It is still split exact as a sequence of 
$\F$-modules after tensoring with $\F$.
This shows that $(-)/\tau$ is exact.

The functor sends free $A$-modules to free $A/\tau$-modules. 
By additivity, we conclude that it sends projective $A$-modules to 
projective $A/\tau$-modules and thus descends to the stable categories.
\end{proof}
\end{pro}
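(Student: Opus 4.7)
The plan is to verify the claimed properties in order: that $(-)/\tau$ is a well-defined functor into $\Atmod^{\textnormal{f}}$, that it is strongly monoidal, that it is exact on $\Amodf$, and finally that it descends to the stable categories.

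First, I would observe that for $M$ in $\Amodf$, the quotient $M/\tau := \F \otimes_{\M} M$ carries a natural $A/\tau$-action inherited from the $A$-action on $M$, since $\tau$ annihilates $\F$. Because $M$ is free and finitely generated as an $\M$-module, $M/\tau$ is free and finitely generated over $\F$, hence lies in $\Atmod^{\textnormal{f}}$. Functoriality is immediate since any $A$-linear map descends modulo $\tau$. Strong monoidality follows from the base-change isomorphism
\[
(M \otimes_{\M} N)/\tau \;\cong\; (M/\tau) \otimes_{\F} (N/\tau),
\]
which is natural and $A/\tau$-linear (the two $A/\tau$-actions agree via the comultiplication), combined with the fact that the unit $\M$ is sent to the unit $\F$.

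The main obstacle is exactness, where the $\M$-freeness built into $\Amodf$ is essential. I would argue that any short exact sequence $0 \to K \to L \to N \to 0$ in $\Amodf$ splits as a sequence of $\M$-modules: since $N$ is free, hence projective, over $\M$ (as established in Lemma \ref{lem:Aproj-Mfree}), the surjection $L \to N$ admits an $\M$-linear section. Any additive functor preserves split sequences, so applying $\F \otimes_{\M} -$ yields a split, and therefore exact, sequence of $A/\tau$-modules. Without the freeness hypothesis one would only get right exactness, which is exactly why the restriction to $\Amodf$ is unavoidable.

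Finally, to descend to the stable categories I would show that $(-)/\tau$ sends projective $A$-modules to projective $A/\tau$-modules. A free $A$-module $\Sigma^{m,n} A$ is sent to $\Sigma^{m,n}(A/\tau)$, which is free over $A/\tau$, and additivity extends this to arbitrary projectives, which are summands of free modules. Consequently, any morphism factoring through a projective $A$-module is sent to a morphism factoring through a projective $A/\tau$-module, so the stable equivalence relation is respected and the functor descends to $\st(A) \to \st(\At)$. The induced functor inherits strong monoidality directly from the monoidal structure on $\Amodf$, since the structure isomorphisms are already maps in the stable category.
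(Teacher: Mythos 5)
Your proposal is correct and follows essentially the same route as the paper: strong monoidality via the base-change isomorphism, exactness via $\M$-splitting of short exact sequences in $\Amodf$ (you correctly identify the key point that $\M$-projectivity of the cokernel provides the section), and descent to the stable categories because free $A$-modules are sent to free $A/\tau$-modules and hence projectives to projectives. The extra details you supply (well-definedness of the $A/\tau$-action, the remark about only right-exactness holding in general) are fine elaborations of the same argument.
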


\begin{rk}
Note that
$\Atmod^{\textnormal{f}} = \Atmod$, 
since the ground ring $\F$ is a field.
\end{rk}

\begin{rk}
The functor
$\Amodf \stackrel{(-)/\tau}{\ltooo}  \Atmod^{\textnormal{f}}$
of Proposition \ref{pro:tauquotientexact} preserves exact sequences,
but it is not an ``exact functor" in the usual sense because
$\Amodf$ is not an abelian category.  Namely, the cokernel of a map
in $\Amodf$ need not be $\M$-free.
\end{rk}

We now come to the first major result that will allow us
to understand the stable module category of a finite motivic Hopf algebra $A$. 
Lemma \ref{lemma:killtau} identifies projective $A$-modules
in terms of their quotients by $\tau$.

\begin{lemma} \label{lemma:killtau}
Let $A$ be a finite motivic Hopf algebra, 
and let $M$ be a finitely generated $A$-module that is $\M$-free.
The following conditions are equivalent:
\begin{enumerate}
\item $M$ is projective as an $A$-module.
\item $M/\tau$ is projective as an $A/\tau$-module.
\item $M/\tau$ is free as an $A/\tau$-module.
\end{enumerate}

\begin{proof}
Note that $A/\tau$ is a Frobenius algebra since it is a finite
dimensional Hopf algebra over the field $\F$ \cite[Theorem 12.2.9]{Mar83}.
In particular,
projective $A/\tau$-modules and free $A/\tau$-modules are the same.
This shows that conditions (2) and (3) are equivalent.

Now suppose that $M$ is a projective $A$-module. 
Then $M/\tau$ is a projective $A/\tau$-module by Proposition \ref{pro:tauquotientexact}. 
This shows that condition (1) implies condition (2).

To show that condition (3) implies condition (1), 
suppose that $M/\tau$
 is a free $A/\tau$-module.
We will show that $\ext^i_A(M,N)$ vanishes for all $A$-modules $N$.
In fact, it suffices to assume that $N$ is finitely generated,
for $\Hom_A(M, -)$ commutes with filtered colimits since
$M$ is finitely generated, and filtered colimits are exact.

Since $M/\tau$ is free over $A/\tau$, we have an
$A$-free resolution of $M / \tau$ of the form
\begin{equation*}
\cdots \lto 0 \lto \bigoplus A \stackrel{\tau}{\lto} \bigoplus A \lto M/\tau.
\end{equation*}
Therefore,
$\ext^{i}_A (M/\tau,N)$ vanishes whenever $i \geq 2$ and $N$ is any
$A$-module.

Since $M$ is $\M$-free, we have a short exact sequence
\begin{equation*}
0 \lto M \lto M \lto M/\tau \lto 0.
\end{equation*}
This sequence induces a long exact sequence 
\begin{equation*}
\cdots \lto \ext^{i}_A (M,N) \stackrel{\tau}{\lto} 
\ext^{i}_A (M,N) \lto \ext^{i+1}_A (M/\tau,N)
\lto \cdots
\end{equation*}
for all $A$-modules $N$.
Since $\ext^{i+1}_A (M/\tau,N)$ is zero for $i \geq 1$ by the previous 
paragraph, we conclude that
the map
\begin{equation*}
\ext^{i}_A(M,N) \stackrel{\tau}{\lto} \ext^{i}_A(M,N) 
\end{equation*}
is surjective for $i \geq 1$. 

Note that $M$ and $N$ are finitely generated as $\M$-modules since
they are finitely generated as $A$-modules, and $A$ is a finitely
generated $\M$-module.
This implies that $\ext^{i}(M,N)$ vanishes in sufficiently low motivic
weights. 
The surjectivity of multiplication by $\tau$ then implies that
$\ext^i (M,N)$ vanishes in all weights.
This means that $M$ is a projective $A$-module.
\end{proof}
\end{lemma}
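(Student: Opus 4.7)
The equivalence of (2) and (3) follows because $A/\tau$ is a finite-dimensional connected Hopf algebra over the field $\F$, hence a Frobenius algebra, so projective and free $A/\tau$-modules coincide. The implication (1) $\Rightarrow$ (2) is immediate from Proposition \ref{pro:tauquotientexact}, whose functor $(-)/\tau$ sends projective $A$-modules to projective $A/\tau$-modules.

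For the substantive direction (3) $\Rightarrow$ (1), my plan is to show that $\ext^i_A(M,N) = 0$ for every $i \geq 1$ and every finitely generated $A$-module $N$. The first step is to exploit the $A/\tau$-freeness of $M/\tau$ by lifting a free $A/\tau$-basis to an $A$-linear surjection $F = \bigoplus A \twoheadrightarrow M/\tau$ whose kernel is $\tau F$, producing the short exact sequence $0 \lto F \stackrel{\tau}{\lto} F \lto M/\tau \lto 0$. The corresponding long exact sequence in $\ext^*_A(-,N)$, together with the vanishing of $\ext^{\geq 1}_A(F,-)$, forces $\ext^i_A(M/\tau,N) = 0$ for all $i \geq 2$. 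Next, the $\M$-freeness of $M$ yields a short exact sequence $0 \lto M \stackrel{\tau}{\lto} M \lto M/\tau \lto 0$ of $A$-modules, whose long exact sequence, combined with the vanishing just obtained, implies that multiplication by $\tau$ on $\ext^i_A(M,N)$ is surjective for every $i \geq 1$.

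The concluding step, which I expect to be the main obstacle, is a Nakayama-style weight argument. Since $M$ is finitely generated over $A$, hence over $\M$, one can resolve $M$ by finitely generated free $A$-modules, using that $A$ is a Noetherian $\M$-algebra. Applying $\Hom_A(-,N)$ to this resolution, with $N$ itself finitely generated over $\M$, produces a complex whose entries are bounded below in weight, so that $\ext^i_A(M,N)$ is bounded below in weight. Because multiplication by $\tau$ raises the weight by one, surjectivity of $\tau$ combined with this boundedness forces $\ext^i_A(M,N)$ to vanish in every weight for $i \geq 1$. This gives $M$ projective and completes the proof. The delicate bookkeeping will be to verify that the finite generation hypotheses indeed propagate through $\Hom$ and $\ext$ to yield the required weight-boundedness.
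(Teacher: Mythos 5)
Your proposal follows essentially the same route as the paper: (2)$\Leftrightarrow$(3) via the Frobenius property of $A/\tau$, (1)$\Rightarrow$(2) via the exactness of $(-)/\tau$, and (3)$\Rightarrow$(1) by deriving $\tau$-surjectivity on $\ext^i_A(M,N)$ from the two short exact sequences and then concluding via weight-boundedness of $\ext$. The only point you leave implicit is why checking $\ext^i_A(M,N)=0$ for finitely generated $N$ suffices to conclude projectivity; the paper handles this by noting that $\Hom_A(M,-)$ commutes with filtered colimits when $M$ is finitely generated, and you should add that remark (or observe that it is enough to split a single short exact sequence $0\to K\to F\to M\to 0$ with $K$ finitely generated).
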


\begin{lemma}
\label{lem:f/tau}
Let $M$ and $N$ be finitely generated $A$-modules that are also
$\M$-free, and let $f: M \lto N$ be a map such that
$f/\tau: M/\tau \lto N/\tau$ is injective.  Then
$f$ is also injective, and the cokernel of $f$ is $\M$-free.
\end{lemma}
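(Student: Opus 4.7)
The plan is to treat injectivity and $\M$-freeness of the cokernel as two separate arguments, both exploiting the fact that $\tau$ is a non-zerodivisor on $M$ and on $N$ (which is the concrete content of $\M$-freeness over the graded PID $\M = \F[\tau]$).

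For injectivity, I would argue by contradiction. Suppose $x \in \ker(f)$ is nonzero. Since $M$ is $\M$-free and $\tau$ is a homogeneous prime, there is a largest $k \geq 0$ such that $x = \tau^k y$ for some $y \in M$, and by maximality $y \notin \tau M$. Applying $f$ gives $\tau^k f(y) = 0$ in $N$. Since $N$ is $\M$-free, $\tau$ acts injectively on $N$, so $f(y) = 0$. Reducing mod $\tau$, the class $\bar y \in M/\tau$ is nonzero (because $y \notin \tau M$), yet $(f/\tau)(\bar y) = \overline{f(y)} = 0$. This contradicts the assumed injectivity of $f/\tau$.

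For the cokernel, observe that both $M$ and $N$ are finitely generated as $\M$-modules (since $A$ is finitely generated over $\M$), hence $\coker(f)$ is a finitely generated $\M$-module. Because $\M$ is a graded PID whose graded ideals are the $(\tau^k)$, finitely generated $\M$-modules split as a sum of a free part and cyclic $\tau$-torsion modules $\M/\tau^k$; consequently, being $\M$-free is equivalent to being $\tau$-torsion free. So it suffices to show $\coker(f)$ has no $\tau$-torsion. Given $z \in N$ with $\tau z = f(m)$ for some $m \in M$, reduce mod $\tau$ to get $(f/\tau)(\bar m) = \overline{\tau z} = 0$; injectivity of $f/\tau$ then forces $m = \tau m'$ for some $m' \in M$, so $\tau z = \tau f(m')$, and since $\tau$ acts injectively on the $\M$-free module $N$, we conclude $z = f(m')$. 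Thus $\bar z = 0$ in $\coker(f)$, so $\coker(f)$ is $\tau$-torsion free, hence $\M$-free.

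I do not expect any serious obstacle. The only subtle point is invoking the structure theorem for finitely generated modules over the graded PID $\M$ (already used in the proof of Lemma \ref{lem:Aproj-Mfree}) to reduce $\M$-freeness of $\coker(f)$ to the absence of $\tau$-torsion; once that reduction is in place the verification is a short diagram chase using that $\tau$ is a non-zerodivisor on $M$ and $N$.
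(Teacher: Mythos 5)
Your proof is correct and takes essentially the same approach as the paper: both arguments reduce to the observation that $\tau$ is a non-zerodivisor on $\M$-free modules, use injectivity of $f/\tau$ to extract a factor of $\tau$, and appeal to finite generation to conclude. The only cosmetic difference is that your injectivity step is phrased as a contradiction via a maximal power of $\tau$, whereas the paper argues directly that kernel elements are infinitely $\tau$-divisible and hence zero.
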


\begin{proof}
Suppose that $x$ is an element of $M$ such that $f(x) = 0$,
and let $\overline{x}$ be the corresponding element in $M/\tau$.
Then $(f/\tau) (\overline{x})$ is zero,
so $\overline{x}$ is also zero because $f/\tau$ is injective.
Therefore, $x$ equals $\tau y$ for some $y$.
Now $\tau f(y) = f(\tau y) = f(x) = 0$,
so $f(y)$ is also zero since $N$ is $\M$-free.

This shows that the kernel of $f$ consists of elements that are 
infinitely divisible by $\tau$.  Since $M$ is finitely generated,
the kernel must be zero.

Now consider the cokernel $N/M$ of $f$.  Since $N/M$ is finitely generated,
it suffices to consider the annihilator of $\tau$
in $N/M$.  We will show that this annihilator is zero.

Let $x$ be an element of $N$, and let $\overline{x}$ be the element
of $N/M$ that it represents.  Suppose that $\tau \overline{x}$ is zero.
Then $\tau x$ belongs to $M$.  Since $f/\tau$ is injective
and $(f/\tau) (\tau x)$ is zero, we conclude as in the first paragraph
that $\tau x$ equals $\tau y$ for some $y$ in $M$.
Since $N$ is $\M$-free, it follows that $x$ equals $y$.
In particular, $x$ belongs to $M$.  In other words,
$\overline{x}$ is zero.
\end{proof}

The strong monoidal exact functor
$$-/\tau : \st(A) \lto \st(A/\tau)$$
of Proposition \ref{pro:tauquotientexact}
induces a group homomorphism
$$V : \Pic(A) \lto \Pic(A/\tau).$$

\begin{pro} \label{pro:picardmaps}
The map $V : \Pic(A) \lto \Pic(A/\tau)$
is injective.
\begin{proof}
Let $M$ be a finitely generated $A$-module such that $M$ is $\M$-free,
and suppose that $[M]$ in $\Pic(A)$ belongs to the kernel of $V$.
Equivalently, $M/\tau$ and $\F$ are stably equivalent $A/\tau$-modules.
Since $A/\tau$ is a finite dimensional Frobenius algebra over $\F$,
we can use \cite[Proposition 14.11]{Mar83} to see that
$M/\tau$ is isomorphic to the direct sum of $\F$ and a free
$A/\tau$-module.  In other words,
$M/\tau$ is isomorphic to $\F \oplus F/\tau$, where
$F$ is a free $A$-module.
Let $j$ be the injection $F/\tau \lto M/\tau$.

There is a commutative diagram
\begin{equation*} \label{eq:proofofpicardmap}
\xymatrix{ M \ar@{->>}[r] & M/\tau \\
F \ar@{-->}[u]^i \ar@{->>}[r] & F/\tau, \ar@{^(->}[u]_j }
\end{equation*}
in which the dashed arrow exists because
$F$ is $A$-projective and $M \lto M/\tau$ is a surjection.
By Lemma \ref{lem:f/tau},
$i$ is injective because $j$ is injective.

We now compute the cokernel $C$ of $i$.
Lemma \ref{lem:f/tau} implies that $C$ is $\M$-free.
Then Proposition \ref{pro:tauquotientexact} says that
$C/\tau$ is isomorphic to the cokernel of $j$,
which is $\F$ by inspection.
We conclude that $C$ is isomorphic to $\M$.

Thus, there is a short exact sequence
\begin{equation*}
F \inj M \surj \M,
\end{equation*}
so $M \surj \M$ is a stable equivalence and $[M]$ is trivial in $\Pic(A)$.
\end{proof}
\end{pro}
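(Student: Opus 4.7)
The plan is to show that if $[M]$ lies in the kernel of $V$, then $M$ is stably equivalent to $\M$ as an $A$-module. Start with a finitely generated $\M$-free representative $M$ for an element of $\ker V$, so $M/\tau$ is stably equivalent to $\F$ over $A/\tau$. Because $A/\tau$ is a finite dimensional Hopf algebra over the field $\F$, it is Frobenius, and so stable equivalence to $\F$ upgrades to an honest isomorphism $M/\tau \cong \F \oplus Q$ for some free $A/\tau$-module $Q$ (this is the content of \cite[Proposition 14.11]{Mar83}). Choose a free $A$-module $F$ with $F/\tau \cong Q$; this is possible because any free $A/\tau$-module is the $\tau$-reduction of a free $A$-module of the same rank and bigrading.

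Next, I would construct a lift of the inclusion $j : F/\tau \hookrightarrow M/\tau$ to an $A$-module map $i : F \lto M$. Such a lift exists because $F$ is $A$-projective and the quotient map $M \lto M/\tau$ is surjective. By Lemma~\ref{lem:f/tau}, injectivity of $j = i/\tau$ forces $i$ to be injective and its cokernel $C$ to be $\M$-free. Then, by the exactness of $(-)/\tau$ from Proposition~\ref{pro:tauquotientexact}, $C/\tau$ is isomorphic to the cokernel of $j$, which is $\F$ by construction.

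At this point I need to upgrade the identification of $C/\tau$ with $\F$ to an identification of $C$ with $\M$. Since $C$ is finitely generated and free over the graded PID $\M$, it is isomorphic to a direct sum of bigraded shifts of $\M$; reducing mod $\tau$ gives the corresponding direct sum of shifts of $\F$. From $C/\tau \cong \F$, concentrated in a single bidegree, we conclude $C \cong \M$ with the appropriate bidegree shift (which is forced to be $(0,0)$ by the bidegree of the generator of $\F$ in $M/\tau$).

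Finally, this produces a short exact sequence
\begin{equation*}
F \inj M \surj \M
\end{equation*}
of $\M$-free $A$-modules, and since $F$ is projective the surjection $M \surj \M$ is a stable equivalence, so $[M]$ is trivial in $\Pic(A)$. The main obstacle I anticipate is the last identification: making sure that the splitting $M/\tau \cong \F \oplus Q$ and the free cover $F$ can be arranged compatibly with the bigrading so that $C$ really is isomorphic to $\M$ rather than to some shifted copy; this reduces to tracking bidegrees through the construction.
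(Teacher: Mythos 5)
Your proposal is correct and follows essentially the same route as the paper's proof: decompose $M/\tau \cong \F \oplus Q$ via the Frobenius property of $A/\tau$ and \cite[Proposition 14.11]{Mar83}, lift the inclusion of the free summand along the projective cover, apply Lemma~\ref{lem:f/tau} and Proposition~\ref{pro:tauquotientexact} to identify the cokernel $C$ with $\M$, and conclude $M \surj \M$ is a stable equivalence. Your extra remarks on tracking bidegrees to pin down $C\cong\M$ (rather than a shift) are a reasonable elaboration of a step the paper treats briefly.
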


\section{The finite motivic Hopf algebra $\A(1)$}

In this section, we introduce the specific 
finite motivic Hopf algebra $\A(1)$ whose Picard group we will compute.

\begin{de} \label{de:a1}
The finite motivic Hopf algebra $\A(1)$ is the 
$\M$-subalgebra of the motivic Steenrod algebra generated by $\Sq^1$ and $\Sq^2$.
\end{de}

\begin{lemma}
\label{lem:A(1)}
The finite motivic Hopf algebra $\A(1)$ is isomorphic to 
$$\frac{\M[\Sq^1,\Sq^2]}{\Sq^1 \Sq^1, \Sq^2\Sq^2 + \tau \Sq^1\Sq^2\Sq^1, \Sq^1\Sq^2\Sq^1\Sq^2 + \Sq^2\Sq^1\Sq^2\Sq^1}.$$
The element $\Sq^1$ is primitive, and 
$\Delta(\Sq^2) = \Sq^2 \otimes 1 + \tau \Sq^1 \otimes \Sq^1 + 1 \otimes \Sq^2$. 
\end{lemma}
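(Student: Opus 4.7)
My plan is to identify $\A(1)$ with the proposed quotient by applying Voevodsky's motivic Adem and Cartan formulas from \cite{Voe03}, and then comparing $\M$-module ranks.

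First I would check that the three listed relations hold in $\A$. The relation $\Sq^1 \Sq^1 = 0$ is the classical Adem relation, which survives unchanged in the motivic setting. The relation $\Sq^2 \Sq^2 = \tau \Sq^1 \Sq^2 \Sq^1$ is the motivic analogue of the classical identity $\Sq^2 \Sq^2 = \Sq^1 \Sq^2 \Sq^1$; the factor of $\tau$ is forced by weight, since $\Sq^2 \Sq^2$ has bidegree $(4,2)$ while $\Sq^1 \Sq^2 \Sq^1$ has bidegree $(4,1)$ and $\tau$ has bidegree $(0,1)$. The third relation follows from the first two together with $\M$-freeness of $\A$: computing $\Sq^2 \Sq^2 \Sq^2$ in two ways by associativity yields
\[\tau \Sq^1 \Sq^2 \Sq^1 \Sq^2 = \tau \Sq^2 \Sq^1 \Sq^2 \Sq^1,\]
and cancelling $\tau$ gives the third relation. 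The cancellation is valid in $\A$ but not in the abstract quotient algebra below, which is why this identity must be imposed as a separate relation in the presentation.

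Next let $B$ denote the quotient of the free associative $\M$-algebra on $\Sq^1, \Sq^2$ by the three relations. The assignment $\Sq^i \mapsto \Sq^i$ defines a surjective $\M$-algebra map $\phi : B \lto \A(1)$. I would argue by a normal form argument that $B$ is spanned as an $\M$-module by the eight alternating monomials
\[1,\ \Sq^1,\ \Sq^2,\ \Sq^1\Sq^2,\ \Sq^2\Sq^1,\ \Sq^1\Sq^2\Sq^1,\ \Sq^2\Sq^1\Sq^2,\ \Sq^1\Sq^2\Sq^1\Sq^2.\]
The first relation kills any word containing $\Sq^1 \Sq^1$. For a word $w$ containing $\Sq^2 \Sq^2$, the second relation lets me replace the leftmost such occurrence by $\tau \Sq^1 \Sq^2 \Sq^1$; iterating, every word eventually becomes a $\tau$-power times an alternating word, or reduces to $0$ because some $\Sq^1 \Sq^1$ arises. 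Alternating words of length greater than $4$ collapse to $0$ because left- or right-multiplication of the length-$4$ alternating words by $\Sq^2$ reintroduces $\Sq^2 \Sq^2$ and hence, after rewriting, $\Sq^1 \Sq^1$. By Voevodsky's structure theorem, $\A(1) \subset \A$ is a free $\M$-module of rank $8$ on these same eight monomials (the classical calculation carries over directly, with $\M$ replacing $\F$). Hence $\phi$ is a surjection between a module spanned by eight elements and a free module of rank $8$ on the images of those elements, and is therefore an isomorphism.

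Finally, the Hopf algebra structure on $\A(1)$ is inherited from $\A$. Primitivity of $\Sq^1$ is classical and involves no $\tau$, and $\Delta(\Sq^2) = \Sq^2 \otimes 1 + \tau \Sq^1 \otimes \Sq^1 + 1 \otimes \Sq^2$ is the motivic Cartan formula of \cite{Voe03}, with the $\tau$ again imposed by weight. The main obstacle is the normal form argument, since iterating the rewriting $\Sq^2 \Sq^2 \mapsto \tau \Sq^1 \Sq^2 \Sq^1$ lengthens words and introduces growing powers of $\tau$; the cleanest way to close it is to bypass a hand-tuned rewriting system and simply invoke the known $\M$-rank of $\A(1)$ from \cite{Voe03} to match ranks.
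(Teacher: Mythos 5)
The paper's proof of this lemma is a single sentence citing Voevodsky's computation of the motivic Steenrod algebra \cite{Voe03}, so your more detailed argument genuinely fills in content the paper omits. Your overall strategy is sound: verify the three relations hold in $\A$ (the second being the motivic Adem relation with the $\tau$ forced by weight, and the third being derivable from the second by associativity and $\M$-torsion-freeness of $\A$, but still needing to be imposed separately in the abstract presentation), then present $\A(1)$ by generators and relations and match $\M$-ranks.

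Two comments on the final paragraph. First, your worry that the rewriting system might not terminate is unfounded. The rewrite $\Sq^2\Sq^2 \lmapsto \tau \Sq^1\Sq^2\Sq^1$ does lengthen the word by one letter, but it strictly decreases the number of occurrences of $\Sq^2$, which is a nonnegative integer invariant; hence the process terminates, and every monomial reduces to a $\tau$-power times an alternating word, or to $0$ via $\Sq^1\Sq^1$. Your length-5 computations using the third relation then kill all alternating words of length $\ge 5$, so the spanning set of eight monomials is established cleanly. Second, the proposed workaround in your last sentence --- to ``bypass'' the normal form argument and ``simply invoke the known $\M$-rank of $\A(1)$'' --- does not actually close the gap: a surjection from $B$ onto a free $\M$-module of rank $8$ forces an isomorphism only once you know $B$ is generated by $8$ elements (so that the splitting $B \cong \A(1) \oplus \ker\phi$ together with graded Nakayama forces $\ker\phi = 0$). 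So the spanning argument is essential, not optional; fortunately it does work, so no repair is needed.
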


\begin{proof}
This follows immediately from Voevodsky's description of the 
motivic Steenrod algebra \cite{Voe03}.
\end{proof}

See Figure \ref{fig:a1} for a picture of $\A(1)$.
When writing $\A(1)$-modules we use the following conventions.
A straight line represents the action of $\Sq^1$, a curved line represents the action of $\Sq^2$, and a line is dotted if a squaring operation 
hits $\tau$ times a generator. For example, the dotted line in 
Figure \ref{fig:a1} shows the relation $\Sq^2\Sq^2 = \tau \Sq^1\Sq^2\Sq^1.$ 

\begin{figure} 
\begin{tikzpicture}[line cap=round,line join=round,>=triangle 45,x=1.0cm,y=1.0cm]

\filldraw [black] (0,0) circle (0.07)
                  (0,1) circle (0.07)
                  (0,2) circle (0.07)
                  (-1,3) circle (0.07)
                  (1,3) circle (0.07)
                  (0,4) circle (0.07)
                  (0,5) circle (0.07)
                  (0,6) circle (0.07);
\draw (0,0) -- (0,1)
      (0,2) -- (1,3)
      (-1,3) -- (0,4)
      (0,5) -- (0,6);
\draw (0,0) .. controls (0.7,0.7) and (0.7,1.3) .. (0,2)
      (0,1) .. controls (-0.7,1.7) and (-1,2.3) .. (-1,3)
      (1,3) .. controls (1,3.7) and (0.7,4.3) .. (0,5)
      (0,4) .. controls (-0.7,4.7) and (-0.7,5.3) .. (0,6);
\draw [dashed] (0,2) .. controls (0.7,2.7) and (0.7,3.3) .. (0,4);
\draw (-0.3,6) node {$1$};
\draw (-0.45,4) node {$\Sq^2$};
\draw (0,5) node[anchor=south west] {$\Sq^1$};

\end{tikzpicture}
\caption{The finite motivic Hopf algebra $\A(1)$}
\label{fig:a1}
\end{figure}
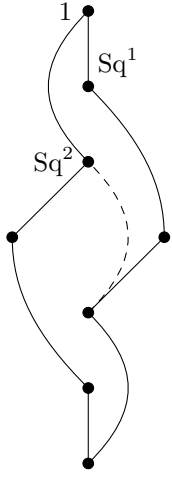

\begin{lemma}
\label{lem:A(1)/tau}
As ungraded Hopf algebras,
$\A(1)/\tau$ 
is isomorphic to the group algebra $\F[D_8]$ of the dihedral group $D_8$
of order $8$.
\begin{proof}
Lemma \ref{lem:A(1)} implies that $\A(1) / \tau$ is isomorphic to
$$\frac{\F[\Sq^1,\Sq^2]}{\Sq^1 \Sq^1, \Sq^2\Sq^2, 
\Sq^1\Sq^2\Sq^1\Sq^2 + \Sq^2\Sq^1\Sq^2\Sq^1}.$$
For our purposes, a convenient presentation of $D_8$ consists of
two generators $x$ and $y$ with the relations
$x^2$, $y^2$, and $(xy)^4$.
The isomorphism from $\A(1)/\tau$ to $\F [D_8]$
takes $\Sq^1$ to $1 +x$ and $\Sq^2$ to $1+y$.
\end{proof}
\end{lemma}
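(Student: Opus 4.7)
The plan is to specialize the presentation of $\A(1)$ from Lemma \ref{lem:A(1)} at $\tau = 0$, write down a convenient presentation of $D_8$, exhibit a candidate map on generators, and then verify the defining relations, bijectivity, and the Hopf compatibility.

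Setting $\tau = 0$ in Lemma \ref{lem:A(1)} kills the $\tau\Sq^1\Sq^2\Sq^1$ summand in the relation for $\Sq^2\Sq^2$ and the cross term $\tau\Sq^1 \otimes \Sq^1$ in $\Delta(\Sq^2)$, yielding the presentation
\[
\A(1)/\tau \cong \frac{\F\langle \Sq^1, \Sq^2 \rangle}{(\Sq^1\Sq^1,\ \Sq^2\Sq^2,\ \Sq^1\Sq^2\Sq^1\Sq^2 + \Sq^2\Sq^1\Sq^2\Sq^1)},
\]
with both $\Sq^1$ and $\Sq^2$ primitive in the residual coproduct. I then use the presentation $D_8 = \langle x, y \mid x^2, y^2, (xy)^4\rangle$ of two involutions with a braid relation of length four, whose shape matches the $\A(1)/\tau$ relations, and define $\phi\colon \A(1)/\tau \to \F[D_8]$ on generators by $\phi(\Sq^1) = 1+x$ and $\phi(\Sq^2) = 1+y$.

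To see $\phi$ is a well-defined algebra map, I verify the three defining relations. In characteristic two, $(1+x)^2 = 1 + x^2 = 0$ and similarly $(1+y)^2 = 0$, because $x^2 = y^2 = 1$. For the braid relation $((1+x)(1+y))^2 + ((1+y)(1+x))^2 = 0$, the key structural input is that $yx = (xy)^{-1}$ (since $x,y$ are involutions) together with $(xy)^4 = 1$ forces $(yx)^2 = (xy)^{-2} = (xy)^2$, both equal to the unique central element of order two in $D_8$. A direct expansion shows that in characteristic two the cross terms pair off and only $(xy)^2 + (yx)^2 = 0$ survives. Surjectivity of $\phi$ is immediate, since $x = 1 + \phi(\Sq^1)$ and $y = 1 + \phi(\Sq^2)$ generate $\F[D_8]$, and a dimension count ($\dim_{\F} \A(1)/\tau = 8 = |D_8| = \dim_{\F}\F[D_8]$) upgrades surjectivity to bijectivity.

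It then remains to upgrade $\phi$ to an isomorphism of Hopf algebras, i.e., to verify that $\phi$ intertwines the coproducts; by the multiplicativity of $\Delta$ it suffices to check compatibility on the algebra generators $\Sq^1$ and $\Sq^2$. The left-hand coproducts are primitive after specializing at $\tau = 0$, while on the right $\Delta(1+x) = 1\otimes 1 + x\otimes x$ and analogously for $y$. This Hopf compatibility is the main obstacle: the two sides have generators of superficially different coalgebra character (primitive versus grouplike), and establishing the equality $(\phi \otimes \phi) \circ \Delta = \Delta \circ \phi$ in $\F[D_8]\otimes\F[D_8]$ requires a careful expansion exploiting $x^2 = y^2 = 1$ in characteristic two to rewrite the grouplike coproduct of $1+x$ in the form forced by $\phi$.
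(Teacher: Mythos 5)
Your verification of the algebra isomorphism is correct and usefully fleshes out the paper's terse proof, which simply exhibits the map: $(1+x)^2=1+x^2=0$ in characteristic $2$ since $x^2=1$; the expansion of $\bigl((1+x)(1+y)\bigr)^2+\bigl((1+y)(1+x)\bigr)^2$ does collapse to $xyxy+yxyx$, which vanishes because $(yx)^2=(xy)^{-2}=(xy)^2$ when $(xy)^4=1$; and surjectivity together with the count $\dim_{\F}\A(1)/\tau=8=\dim_{\F}\F[D_8]$ gives bijectivity. Up to that point you are doing exactly what the paper does, with the omitted checks supplied.

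The final step, however, is not a postponed computation but one that fails. The map $\phi$ does not intertwine the coproducts: since $\Sq^1$ is primitive,
$(\phi\otimes\phi)\Delta(\Sq^1)=(1+x)\otimes 1+1\otimes(1+x)=x\otimes 1+1\otimes x$,
whereas $\Delta(\phi(\Sq^1))=\Delta(1+x)=1\otimes 1+x\otimes x$; the discrepancy is $(1+x)\otimes(1+x)\neq 0$ in $\F[D_8]\otimes\F[D_8]$, and no rewriting using $x^2=1$ removes it. Worse, no choice of isomorphism can be a map of coalgebras: the $\F$-linear dual of $\F[D_8]$ is the algebra of $\F$-valued functions on $D_8$, which is semisimple with eight orthogonal idempotents, while the dual of $\A(1)/\tau$ is a connected finite-dimensional algebra (a quotient of the dual motivic Steenrod algebra), hence local with nilpotent augmentation ideal; since the duals are non-isomorphic as algebras, the coalgebras are non-isomorphic. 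So the content of the lemma is an isomorphism of ungraded \emph{algebras} --- which is also all that the paper's own proof establishes --- and notions such as projectivity of modules and Margolis homology, which is how the identification is used, depend only on the algebra structure. You should delete the claimed Hopf compatibility rather than attempt to prove it.
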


Recall that a sub-Hopf algebra $B$ of a Hopf $\F$-algebra $A$ is elementary if it is isomorphic to an exterior algebra.
Note that $Q_0 = \Sq^1$ and $Q_1 = \Sq^2 \Sq^1 + \Sq^1 \Sq^2$
are elements of $\A(1)$ whose squares are zero.

\begin{lemma}
\label{lem:elementary}
The maximal elementary sub-Hopf algebras of
$\A(1)/\tau$ are the exterior algebras
$E(Q_0, Q_1)$ and $E(\Sq^2, Q_1)$.
\end{lemma}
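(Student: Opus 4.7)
The plan is to apply the Milnor--Moore structure theorem: since $\A(1)/\tau$ is a bigraded connected cocommutative Hopf algebra over the field $\F$, it is the restricted universal enveloping algebra of its primitives $P$, and an elementary sub-Hopf algebra (that is, one isomorphic to an exterior algebra) corresponds precisely to an abelian restricted Lie subalgebra of $P$ on which the restriction (the squaring map) vanishes identically.

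First I would compute $P$. Reducing the coproduct formulas of Lemma \ref{lem:A(1)} modulo $\tau$ shows that $\Sq^1$ and $\Sq^2$ become primitive in $\A(1)/\tau$, and a direct expansion of $\Delta(\Sq^1 \Sq^2) + \Delta(\Sq^2 \Sq^1)$ shows that $Q_1 = \Sq^1 \Sq^2 + \Sq^2 \Sq^1$ is primitive as well. Evaluating the coproduct on each of the remaining basis elements $\Sq^1 \Sq^2$, $\Sq^2 \Sq^1$, $\Sq^1 \Sq^2 \Sq^1$, $\Sq^2 \Sq^1 \Sq^2$, $\Sq^1 \Sq^2 \Sq^1 \Sq^2$ then shows that $\{\Sq^1, \Sq^2, Q_1\}$ is a basis for $P$.

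Next I would record the restricted Lie algebra structure on $P$. The relations of Lemma \ref{lem:A(1)} reduce modulo $\tau$ to $(\Sq^1)^2 = (\Sq^2)^2 = 0$ and $\Sq^1 \Sq^2 \Sq^1 \Sq^2 = \Sq^2 \Sq^1 \Sq^2 \Sq^1$, from which one computes $Q_1^2 = 0$ and the crucial identity $(\Sq^1 + \Sq^2)^2 = \Sq^1 \Sq^2 + \Sq^2 \Sq^1 = Q_1$. The commutators are $[\Sq^1, \Sq^2] = Q_1$ and $[\Sq^1, Q_1] = [\Sq^2, Q_1] = 0$, so $Q_1$ is central, and the commutator descends to a nondegenerate antisymmetric form on $P/\langle Q_1 \rangle \cong \F^2$.

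With this structure in hand, the classification is almost linear algebra: because the induced form on $P/\langle Q_1\rangle$ is nondegenerate, any abelian subspace of $P$ must project to at most a line, so the maximal abelian subspaces are the three $2$-dimensional subspaces $\langle \Sq^1, Q_1\rangle$, $\langle \Sq^2, Q_1\rangle$, and $\langle \Sq^1 + \Sq^2, Q_1\rangle$, each containing the center. Centrality of $Q_1$ makes every element of the first two subspaces square to zero, so they give the exterior algebras $E(Q_0, Q_1)$ and $E(\Sq^2, Q_1)$. The third candidate is eliminated by the identity $(\Sq^1 + \Sq^2)^2 = Q_1 \neq 0$, which is precisely the main obstacle: this single computation is what breaks the apparent symmetry and turns the classification from a formal enumeration into a genuine algebraic statement.
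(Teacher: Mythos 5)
Your proof is correct and takes a genuinely different route from the paper. The paper's proof routes through Lemma \ref{lem:A(1)/tau}: it identifies $\A(1)/\tau$ with the group algebra $\F[D_8]$, observes that elementary sub-Hopf algebras of a group algebra correspond to elementary abelian subgroups, and reads off the two maximal Klein four subgroups of $D_8$. You instead work intrinsically with the Hopf structure: you compute the three-dimensional space of primitives $P = \langle \Sq^1, \Sq^2, Q_1\rangle$, describe its restricted Lie algebra structure (with $Q_1$ central, $[\Sq^1,\Sq^2]=Q_1$, and the restriction $v\mapsto v^2$), and classify the abelian restricted Lie subalgebras on which the restriction vanishes. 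The two approaches buy different things. The paper's is very short once the $D_8$ identification is in hand, and it leverages classical modular representation theory. Your argument is more self-contained and more directly Hopf-theoretic; in particular it does not rest on the $D_8$ identification at all, which is a genuine advantage, because (as one can check by looking for primitives) that identification is really an isomorphism of augmented algebras rather than of Hopf algebras, so invoking it to classify \emph{sub-Hopf} algebras requires a little more care than the paper lets on. Your computation $(\Sq^1+\Sq^2)^2 = Q_1$ is exactly the restricted-Lie incarnation of the $C_4 \le D_8$ that rules out the third maximal abelian and is the real content of the classification. One small imprecision worth fixing: not every connected graded cocommutative Hopf algebra over $\F$ is a restricted enveloping algebra of its primitives --- you need primitive generation. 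Here that is immediate, since $\Sq^1$ and $\Sq^2$ are both primitive modulo $\tau$ and generate, but it should be said explicitly before invoking Milnor--Moore.
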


\begin{proof}
Lemma \ref{lem:A(1)/tau} says that $A/\tau$ is isomorphic to the 
group algebra $\F [D_8]$ of the dihedral group of order 8.
The elementary sub-Hopf algebras of $\F[D_8]$ correspond
to the elementary abelian $2$-subgroups of $D_8$.
The group $D_8$ has two maximal elementary abelian subgroups.
Tracing back through the isomorphism of Lemma \ref{lem:A(1)/tau},
one can identify the two maximal elementary
sub-Hopf algebras of $\A(1)/\tau$.
\end{proof}

\subsection{Margolis homology}

We now turn to an algebraic invariant detecting projectivity of $\A(1)$-modules, analogous to Margolis's techniques  using 
$P^s_t$-homology \cite{Mar83}. 

\begin{de}
Let $x$ be an element of $A$ such that $x^2$ is zero.
For any $A$-module $M$, define the
Margolis homology $H(M;x)$ to be the
annihilator of $x$ modulo the submodule $x M$.
\end{de}

Recall that classically,
an $\A(1)^{\cl}$-module $M$ is projective if and only if
$H(M; Q_0)$ and $H(M;Q_1)$ are both zero \cite[Theorem 3.1]{Ad71}, which is a direct consequence of a more general result \cite[Theorem 1.2-1.4]{Pa97}.
Our goal is to generalize this result to the motivic situation.

Unfortunately, the motivic situation is more complicated.
If $M$ is an $\A(1)$-module and
$H(M; Q_0)$ and $H(M;Q_1)$ both vanish,
then $M$ is not necessarily projective.

\begin{ex}
Let $\widetilde{\A}(1)$ be the $\A(1)$-module on two generators
$x$ and $y$ of degrees $(0,0)$ and $(2,0)$ respectively,
subject to the relations
$\Sq^2 x = \tau y$ and
$\Sq^1 \Sq^2 \Sq^1 x = \Sq^2 y$.
Figure \ref{fig:a1tilda} represents $\widetilde{\A}(1)$ as
an $\A(1)$-module.

The Margolis homology groups
$H(\widetilde{\A}(1); Q_0)$ and
$H(\widetilde{\A}(1); Q_1)$ both vanish.
However, $\widetilde{\A}(1)$ is not a projective $\A(1)$-module.
\end{ex}

\begin{figure} 
\begin{tikzpicture}[line cap=round,line join=round,>=triangle 45,x=1.0cm,y=1.0cm]

\filldraw [black] (0,0) circle (0.07)
                  (0,1) circle (0.07)
                  (0,2) circle (0.07)
                  (-1,3) circle (0.07)
                  (1,3) circle (0.07)
                  (0,4) circle (0.07)
                  (0,5) circle (0.07)
                  (0,6) circle (0.07);
\draw (0,0) -- (0,1)
      (0,2) -- (1,3)
      (-1,3) -- (0,4)
      (0,5) -- (0,6);
\draw 
      (0,1) .. controls (-0.7,1.7) and (-1,2.3) .. (-1,3)
      (0,2) .. controls (0.7,2.7) and (0.7,3.3) .. (0,4)
      (1,3) .. controls (1,3.7) and (0.7,4.3) .. (0,5);
\draw [dashed] (0,0) .. controls (0.7,0.7) and (0.7,1.3) .. (0,2)
               (0,4) .. controls (-0.7,4.7) and (-0.7,5.3) .. (0,6);
\draw (0,6) node[anchor=east] {$x$};
\draw (-0.3,4) node {$y$};
               
\end{tikzpicture}
\caption{The $\A(1)$-module $\widetilde{\A}(1)$}
\label{fig:a1tilda}
\end{figure}

It turns out that we need two additional criteria for
projectivity beyond
$Q_0$-homology and Margolis $Q_1$-homology.

\begin{pro} \label{pro:margolisdetection}
Let $M$ be a finitely generated $\A(1)$-module. 
Then $M$ is projective if and only if:
\begin{enumerate}
\item
$M$ is free over $\M$; and
\item
$H(M/\tau;Q_0)=0$; and
\item
$H(M/\tau;Q_1)=0$; and
\item
$H(M/\tau;\Sq^2)=0$.
\end{enumerate}

\begin{proof}
First suppose that $M$ is projective.
By inspection, conditions (2) through (4)
are satisfied when $M$ is $\A(1)$.  Therefore,
these conditions are satisfied when $M$ is free.  Using
that a projective module is a summand of a free module,
conditions (2) through (4) are also satisfied for any projective $M$.
Finally, Lemma \ref{lem:Aproj-Mfree} shows that condition (1) is satisfied.

Now suppose that conditions (1) through (4) are satisfied.
By Lemma \ref{lemma:killtau}, it suffices to show
that $M/\tau$ is $A/\tau$-projective.
Note that $A/\tau$-projectivity is detected by restriction
to the quasi-elementary sub-Hopf algebras of $A/\tau$ \cite[Theorem 1.2-1.4]{Pa97}.
See \cite[Definition 1.1]{Pa97} for the definition of quasi-elementary sub-Hopf algebras.

For group algebras, quasi-elementary sub-Hopf algebras coincide
with elementary sub-Hopf algebras \cite{Ser65} (as observed in \cite{Pa97}).
Since $A/\tau$ is isomorphic to the group algebra $\F [D_8]$
by Lemma \ref{lem:A(1)/tau},
Lemma \ref{lem:elementary} shows that
the quasi-elementary sub-Hopf algebras of $A/\tau$ 
are the exterior algebra $E(Q_0, Q_1)$ and the
exterior algebra $E(\Sq^2, Q_1)$.
Conditions (2) and (3) imply that
$M/\tau$ is $E(Q_0, Q_1)$-projective,
and conditions (3) and (4) imply that
$M/\tau$ is $E(\Sq^2, Q_1)$-projective.
\end{proof}
\end{pro}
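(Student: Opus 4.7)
The plan is to reduce the problem to projectivity over the ``easier'' Hopf algebra $\A(1)/\tau$ using Lemma \ref{lemma:killtau}, and then apply a standard detection theorem for projectivity over finite cocommutative Hopf algebras via restriction to elementary sub-Hopf algebras. The key inputs would be Lemmas \ref{lem:Aproj-Mfree}, \ref{lemma:killtau}, \ref{lem:A(1)/tau}, and \ref{lem:elementary}.

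I would first handle the forward direction. Condition (1) is exactly Lemma \ref{lem:Aproj-Mfree}. For conditions (2)--(4), it suffices by additivity of Margolis homology, together with the fact that projective modules are summands of free ones, to verify them for $\A(1)$ itself. Direct inspection of the picture of $\A(1)/\tau$ read off from Figure \ref{fig:a1} shows that the Margolis homologies $H(\A(1)/\tau; Q_0)$, $H(\A(1)/\tau; Q_1)$, and $H(\A(1)/\tau; \Sq^2)$ all vanish, as every cycle one can spot is visibly a boundary.

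For the reverse direction, I would begin by invoking Lemma \ref{lemma:killtau}, which uses condition (1) to replace the goal ``$M$ is $\A(1)$-projective'' by the equivalent goal ``$M/\tau$ is $\A(1)/\tau$-projective.'' Then I would use that $\A(1)/\tau \cong \F[D_8]$ by Lemma \ref{lem:A(1)/tau}, combined with Palmieri's theorem that projectivity over a finite dimensional cocommutative Hopf algebra is detected by restriction to its quasi-elementary sub-Hopf algebras, and Serre's observation that for group algebras these coincide with the elementary sub-Hopf algebras. Lemma \ref{lem:elementary} then identifies the maximal such sub-Hopf algebras of $\A(1)/\tau$ as $E(Q_0, Q_1)$ and $E(\Sq^2, Q_1)$. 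It remains to invoke the classical fact that projectivity over an exterior Hopf algebra on primitive generators is equivalent to the simultaneous vanishing of Margolis homology on each generator: conditions (2) and (3) give $E(Q_0, Q_1)$-projectivity of $M/\tau$, while conditions (3) and (4) give $E(\Sq^2, Q_1)$-projectivity.

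The main obstacle I expect is making sure that $\M$-freeness is threaded through correctly so that Lemma \ref{lemma:killtau} applies and so that nothing collapses under the tensor-with-$\F$ functor; without condition (1) the reduction to $\A(1)/\tau$ breaks down, which reflects the fact that the motivic base ring is not a field. The example $\widetilde{\A}(1)$ already warns that the $\Sq^2$ condition is genuinely independent of the $Q_0$ and $Q_1$ conditions, so one should not expect a shortcut that bypasses the $E(\Sq^2, Q_1)$ factor in Palmieri's detection.
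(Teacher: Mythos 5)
Your proposal matches the paper's proof in essentially every respect: the forward direction via freeness plus inspection of $\A(1)$, and the reverse direction via Lemma \ref{lemma:killtau}, Palmieri's quasi-elementary detection theorem, Serre's observation reducing quasi-elementary to elementary for group algebras, and Lemma \ref{lem:elementary} identifying $E(Q_0, Q_1)$ and $E(\Sq^2, Q_1)$. Your concluding remark correctly flags the role of condition (1) and the independence of the $\Sq^2$ condition, both of which reflect the paper's own emphasis.
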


\begin{rk}
The exterior algebra $E(Q_0, Q_1)$ is the unique
maximal quasi-elem\-ent\-ary sub-Hopf algebra of the 
classial Hopf algebra $\A(1)^{\cl}$.
This explains why condition (4) of Proposition \ref{pro:margolisdetection}
is absent from the classification of projective
$\A(1)^{\cl}$-modules.
\end{rk}

\begin{cor}
\label{cor:margolisdetection}
Let $M$ and $N$ be finitely generated $\A(1)$-modules 
that are $\M$-free, and let
$f: M \lto N$ be an $\A(1)$-module map.
Then $f$ is a stable equivalence if and only if 
$f/\tau: M/\tau \lto N/\tau$
induces an isomorphism
in Margolis homologies with respect to $Q_0$, $Q_1$, and $\Sq^2$.
\end{cor}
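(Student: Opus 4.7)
The plan is to reduce the statement to Proposition \ref{pro:margolisdetection} applied to the mapping cone of $f$ in $\st(\A(1))$. On one hand, $f$ is a stable equivalence exactly when this cone is projective, which is detected by Margolis homology; on the other hand, $-/\tau$ commutes with the cone construction, so the $\tau$-quotient of the cone serves as a cone of $f/\tau$ whose Margolis homologies sit in a cyclic long exact sequence with those of $M/\tau$ and $N/\tau$.

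First, I construct the cone of $f$ as an $\M$-free $\A(1)$-module. Starting from the short exact sequence $0 \to M \to DP \to \Omega^{-1} M \to 0$, where $P$ is a projective cover of $DM$, $DP$ is projective by the proof of Lemma \ref{lem:stabdual}, and $\Omega^{-1} M$ is $\M$-free as the dual of the $\M$-free module $\Omega DM$, I take the pushout along $f$ to obtain an $\A(1)$-module $C$ fitting in $0 \to N \to C \to \Omega^{-1} M \to 0$. Since $N$ and $\Omega^{-1}M$ are $\M$-free, so is $C$ (by the argument of Lemma \ref{lem:f/tau} applied to multiplication by $\tau$), and the rotated sequence realizes a distinguished triangle $M \xrightarrow{f} N \to C \to \Omega^{-1} M$ in $\st(\A(1))$. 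Hence $f$ is a stable equivalence if and only if $C$ is projective; by Proposition \ref{pro:margolisdetection}, this is equivalent to $H(C/\tau; x) = 0$ for each $x \in \{Q_0, Q_1, \Sq^2\}$.

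Second, I apply the exact, projective-preserving functor $-/\tau$ of Proposition \ref{pro:tauquotientexact}. It commutes with $\Omega$ up to stable isomorphism and hence with the cone, producing the distinguished triangle $M/\tau \xrightarrow{f/\tau} N/\tau \to C/\tau \to \Omega^{-1}(M/\tau)$ in $\st(\A(1)/\tau)$. For each $x \in \{Q_0, Q_1, \Sq^2\}$, the exterior subalgebra $E(x) \subset \A(1)/\tau$ has the property that $\A(1)/\tau$ is free over it (the isomorphism $\A(1)/\tau \cong \F[D_8]$ of Lemma \ref{lem:A(1)/tau} reduces this to freeness of a group algebra over a sub-group-algebra), so $H(-;x)$ is a well-defined functor on $\st(\A(1)/\tau)$ identified with the $2$-periodic Tate cohomology of $E(x)$. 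The associated cyclic long exact sequence
\[
\cdots \to H(M/\tau;x) \xrightarrow{(f/\tau)_\ast} H(N/\tau;x) \to H(C/\tau;x) \to H(M/\tau;x) \to \cdots
\]
then shows that $(f/\tau)_\ast$ is an isomorphism if and only if $H(C/\tau;x) = 0$, completing the equivalence.

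The main obstacle is constructing the cone inside the non-abelian category $\Amodf$ while maintaining $\M$-freeness, so that Proposition \ref{pro:margolisdetection} may be applied to it. The pushout construction above is the standard way to form cones in the Frobenius category $\Amodf$, and Lemma \ref{lem:f/tau} ensures the output is $\M$-free; once this $\M$-free cone is in hand, the rest of the argument is a direct assembly of Proposition \ref{pro:margolisdetection} with the $2$-periodic Margolis long exact sequence.
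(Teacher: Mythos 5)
Your proposal is correct and rests on the same core reduction as the paper: reduce stable equivalence of $f$ to projectivity of an auxiliary module, invoke Proposition \ref{pro:margolisdetection}, and finish with a Margolis long exact sequence. The difference is in the direction of the reduction. The paper replaces $f$ by a surjection $g \colon M \oplus F \to N$ (adding a free summand) and takes the kernel $K$; the resulting short exact sequence $0 \to K/\tau \to (M\oplus F)/\tau \to N/\tau \to 0$ yields the Margolis LES directly with $H((M\oplus F)/\tau;x) \cong H(M/\tau;x)$, and no degree-shifted identifications are needed. You instead build the cone $C$ via a pushout, which is perfectly valid, but it forces two extra (implicit) steps: you must know that $H((\Omega^{-1}M)/\tau;x)$ is canonically $H(M/\tau;x)$ up to a degree shift, and that the connecting map in your cyclic sequence really is $(f/\tau)_\ast$; the paper's kernel-based route sidesteps both. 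Each approach buys roughly the same thing, but the paper's is slightly more elementary in avoiding the triangulated-category bookkeeping.

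One inaccuracy to flag: you justify that $\A(1)/\tau$ is free over $E(x)$ for each $x \in \{Q_0, Q_1, \Sq^2\}$ by reducing to freeness of a group algebra over a sub-group-algebra. This is correct for $Q_0$ and $\Sq^2$, which correspond to $1 + x$ and $1 + y$ in $\F[D_8]$, but $1 + Q_1$ corresponds to $1 + xy + yx$, which is a sum of three group elements and is not a group element of $D_8$, so $E(Q_1)$ is not literally a sub-group-algebra. Freeness of $\A(1)/\tau$ over $E(Q_1)$ still holds — it can be checked directly on the $8$-dimensional module, or derived from the fact that $Q_1$ generates a normal sub-Hopf-algebra — but the cited reason doesn't cover that case.
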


\begin{proof}
We may choose a free $\A(1)$-module $F$ and a surjective map
$g: M \oplus F \lto N$ that restricts to $f$ on $M$.
Then $f$ is a stable equivalence if and only if $g$ is a stable
equivalence,
and $f/\tau$ induces isomorphisms in Margolis homologies
if and only if $g/\tau$ induces isomorphisms in Margolis
homologies.
In other words, we may assume that $f$ is surjective.
(From a model categorical perspective, we have replaced $f$
by an equivalent fibration.)

Let $K$ be the kernel of $f$.
The short exact sequence 
\[
0 \lto K \lto M \stackrel{f}{\lto} N \lto 0
\]
induces a short exact sequence
\[
0 \lto K/\tau \lto M/\tau \stackrel{f/\tau}{\lto} N/\tau \lto 0
\]
by Proposition \ref{pro:tauquotientexact}.
This last short exact sequence induces long exact sequences
in Margolis homologies with respect to $Q_0$, $Q_1$ and $\Sq^2$.
The long exact sequence shows that
$f/\tau$ is an isomorphism in Margolis homologies if and only if
$K/\tau$ has vanishing Margolis homologies.
Finally,
Proposition \ref{pro:margolisdetection}
implies that $K/\tau$ has vanishing Margolis homologies
if and only if $K$ is projective.
Note that $K$ is finitely generated and $\M$-free because it is a 
subobject of the finitely generated $\M$-free module $M$.
Finally, $K$ is projective if and only if
$f$ is a stable equivalence.
\end{proof}

We establish a K\"unneth theorem for Margolis homology.

\begin{pro} \label{pro:kunneth}
Let $M$ and $N$ be $\A(1)$-modules that are free over $\M$. Then
$$H(M/\tau \otimes N/\tau;x) \eq H(M/\tau;x) \otimes H(N/\tau;x)$$
when $x$ is $Q_0$, $Q_1$, or $\Sq^2$.
\begin{proof}
Lemma \ref{lem:A(1)} gives
the coproduct formula
\[
\Delta (\Sq^2) = 
\Sq^2 \otimes 1 + \tau \Sq^1 \otimes \Sq^1 + 1 \otimes \Sq^2.
\]
Therefore, $\Sq^2$ is primitive modulo $\tau$.
In particular, it acts as a derivation on
$M/\tau \otimes N/\tau$.
The isomorphism in $\Sq^2$-homology follows from the
classical K\"unneth formula for chain complexes over $\F$.

The arguments for $Q_0$ and $Q_1$ are the same, except slightly easier
because these elements are primitive even before quotienting by $\tau$.
\end{proof}
\end{pro}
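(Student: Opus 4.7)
The plan is to reduce the statement to the classical K\"unneth theorem for chain complexes over the field $\F$. The key observation is that for each of the three elements $x \in \{Q_0, Q_1, \Sq^2\}$, the pair $(M/\tau, x)$ is a chain complex of $\F$-vector spaces whose homology is precisely $H(M/\tau;x)$, and the same for $N/\tau$. The differential induced by $x$ on the tensor product $M/\tau \otimes_{\F} N/\tau$ will turn out to be the standard tensor product of these two chain complex differentials, so the result becomes a formal consequence of K\"unneth over a field.

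First I would verify that each $x$ acts as a derivation on $M/\tau \otimes_{\F} N/\tau$. Since the action on the tensor product is controlled by the coproduct, this reduces to showing that $Q_0$, $Q_1$, and $\Sq^2$ are primitive modulo $\tau$ in $\A(1)/\tau$. For $Q_0 = \Sq^1$, primitivity is stated in Lemma \ref{lem:A(1)}. For $\Sq^2$, the formula $\Delta(\Sq^2) = \Sq^2 \otimes 1 + \tau \Sq^1 \otimes \Sq^1 + 1 \otimes \Sq^2$ from the same lemma reduces to a primitive coproduct modulo $\tau$. For $Q_1 = \Sq^2\Sq^1 + \Sq^1\Sq^2$, I would compute $\Delta(\Sq^1)\Delta(\Sq^2) + \Delta(\Sq^2)\Delta(\Sq^1)$ directly; the two $\tau$-terms contain $\Sq^1\Sq^1 = 0$ and vanish, and the remaining cross-terms $\Sq^2 \otimes \Sq^1$ and $\Sq^1 \otimes \Sq^2$ appear twice and cancel in characteristic $2$, yielding $\Delta(Q_1) = Q_1 \otimes 1 + 1 \otimes Q_1$.

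Next I would check that $x^2 = 0$ on any $\A(1)/\tau$-module, so that the Margolis homology is genuinely the homology of a chain complex. For $Q_0$ and $Q_1$ this already holds in $\A(1)$ by the relations in Lemma \ref{lem:A(1)}. For $\Sq^2$ the relation $\Sq^2\Sq^2 = \tau \Sq^1\Sq^2\Sq^1$ gives $(\Sq^2)^2 = 0$ only after reducing modulo $\tau$, which is exactly the setting of the proposition.

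Combining these two facts, $(M/\tau, x)$ and $(N/\tau, x)$ are chain complexes of $\F$-vector spaces, and the Leibniz rule shows that the differential on $M/\tau \otimes_{\F} N/\tau$ agrees with the tensor product differential. The K\"unneth theorem for chain complexes over a field (no Tor term since $\F$ is a field) then yields the isomorphism. The main technical point is really the coproduct calculation for $Q_1$; the other steps are immediate from Lemma \ref{lem:A(1)} together with the observation that working modulo $\tau$ is exactly what is needed to make $\Sq^2$ behave as a square-zero primitive.
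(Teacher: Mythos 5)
Your proposal is correct and follows essentially the same route as the paper: reduce to the classical K\"unneth theorem over $\F$ by showing each of $Q_0$, $Q_1$, $\Sq^2$ is a square-zero primitive (hence a derivation) on $M/\tau$, $N/\tau$, and their tensor product. You simply spell out the details that the paper leaves implicit, namely the direct verification that $\Delta(Q_1) = Q_1 \otimes 1 + 1 \otimes Q_1$ and the observation that $(\Sq^2)^2 = 0$ only after reducing modulo $\tau$; both of these checks are accurate, so the argument is complete.
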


\begin{pro}
\label{pro:Margolis-invertible}
Let $M$ be a finitely generated $\A(1)$-module that is $\M$-free.
Then $M$ is invertible if and only if
$M/\tau$ has one-dimensional Margolis homologies with respect to 
$Q_0$, $Q_1$, and $\Sq^2$.

\begin{proof}
First suppose that $M$ is invertible.  In other words,
there exists an $\A(1)$-module $N$ and 
a stable equivalence
$$ M \otimes N \stackrel{\steq}{\lto} \M.$$
Proposition \ref{pro:tauquotientexact} implies that there is
a stable equivalence
$$ (M \otimes N)/\tau \stackrel{\steq}{\lto} \F$$
of $\A(1)/\tau$-modules.
Corollary \ref{cor:margolisdetection} shows that
$$ H ((M \otimes N) / \tau; x) \lto H(\F; x)$$
is an isomorphism when $x$ is $Q_0$, $Q_1$, or $\Sq^2$.
Now use Proposition \ref{pro:kunneth} to deduce that
$H(M/\tau;x) \otimes H(N/\tau;x)$ is isomorphic to $\F$.
It follows that
$H(M/\tau;x)$ is one-dimensional.

Now assume that $M/\tau$ has one-dimensional Margolis homologies.
Note that
\[
H(D(M/\tau);x) \eq \Hom_{\F}(H(M/\tau;x);\F)
\]
when $x$ is $Q_0$, $Q_1$, or $\Sq^2$.
Therefore,
$D(M/\tau)$ also has one-dimensional Margolis homologies.
By Proposition \ref{pro:kunneth},
$M/\tau \otimes D(M/\tau)$ also has one-dimensional Margolis homologies.
Hence the evaluation map
$$ M/\tau \otimes D(M/\tau) \lto \F$$
induces an isomorphism in Margolis homologies because both sides
are one-dim\-ension\-al.
Note that $M/\tau \otimes D(M/\tau)$ is isomorphic to
$(M \otimes DM)/\tau$ by Proposition \ref{pro:tauquotientexact}.
Finally, Corollary \ref{cor:margolisdetection} shows
that the evaluation map
$$ M \otimes DM \lto \M$$
is a stable equivalence.
This shows that $M$ is invertible with inverse $DM$.
\end{proof}
\end{pro}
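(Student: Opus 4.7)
The strategy is to leverage the Margolis-homology detection of stable equivalences (Corollary \ref{cor:margolisdetection}) together with the K\"unneth formula (Proposition \ref{pro:kunneth}) and the duality $[M]^{-1} = [DM]$ from Lemma \ref{lemma:picarddual}, so as to translate invertibility of $M$ into a one-dimensionality statement about the Margolis homologies of $M/\tau$.

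For the forward direction, suppose $M$ is invertible. By Lemma \ref{lemma:picarddual} the evaluation $ev : M \otimes DM \lto \M$ is a stable equivalence, and Corollary \ref{cor:margolisdetection} then gives that $ev/\tau$ induces an isomorphism on $H(-;x)$ for each $x \in \{Q_0, Q_1, \Sq^2\}$. Combining the strong monoidality of $-/\tau$ (Proposition \ref{pro:tauquotientexact}) with the K\"unneth formula, we obtain $H(M/\tau;x) \otimes_{\F} H((DM)/\tau;x) \cong H(\F;x) \cong \F$, so $H(M/\tau;x)$ must be one-dimensional over $\F$.

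For the converse, assume $M/\tau$ has one-dimensional Margolis homology in each of $Q_0$, $Q_1$, $\Sq^2$. Using $DM$ as the candidate inverse, the plan is to show that $ev : M \otimes DM \lto \M$ is a stable equivalence via Corollary \ref{cor:margolisdetection}. Because $M$ is $\M$-free, there is a canonical identification $(DM)/\tau \cong D(M/\tau)$; and standard $\F$-linear duality for the differential $x$ gives $H(D(M/\tau);x) \cong \Hom_{\F}(H(M/\tau;x), \F)$, so $D(M/\tau)$ also has one-dimensional Margolis homology in each of $Q_0$, $Q_1$, $\Sq^2$. Propositions \ref{pro:tauquotientexact} and \ref{pro:kunneth} then imply that $(M \otimes DM)/\tau \cong M/\tau \otimes D(M/\tau)$ has one-dimensional Margolis homologies, matching those of $\F$.

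The main obstacle is to verify that $ev/\tau$ is actually \emph{nonzero} on each of these one-dimensional Margolis homology groups, since a map between one-dimensional $\F$-spaces is an isomorphism iff it is nonzero. For each $x$, pick a cycle $\alpha \in M/\tau$ whose class generates $H(M/\tau;x)$, and let $\alpha^{\vee} \in D(M/\tau)$ be any $\F$-linear functional sending $\alpha$ to $1$ and vanishing on $x(M/\tau)$ together with a chosen complement to $\F\alpha$ inside the $x$-cycles. Then $\alpha^{\vee}$ is an $x$-cycle whose class generates $H(D(M/\tau);x)$, and $ev(\alpha \otimes \alpha^{\vee}) = 1$, confirming that $ev/\tau$ is a nonzero, hence bijective, map of one-dimensional $\F$-vector spaces. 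Corollary \ref{cor:margolisdetection} then promotes this to a stable equivalence $M \otimes DM \steq \M$, and Lemma \ref{lemma:picarddual} concludes that $M$ is invertible with inverse $DM$.
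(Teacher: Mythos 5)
Your proof is correct and follows essentially the same route as the paper's: Margolis-homology detection of stable equivalences (Corollary~\ref{cor:margolisdetection}) plus the K\"unneth formula (Proposition~\ref{pro:kunneth}), reducing invertibility to a one-dimensionality statement modulo $\tau$. There are two small differences. In the forward direction you use $DM$ as the explicit inverse via Lemma~\ref{lemma:picarddual} rather than an abstract $N$; this is a harmless and slightly cleaner choice. More substantively, in the converse direction the paper asserts that the evaluation map induces an isomorphism in Margolis homologies ``because both sides are one-dimensional,'' which is not a complete justification on its own, since a linear map between one-dimensional spaces can be zero. You correctly identified this as the crux and supplied the missing argument: producing an $x$-cycle $\alpha^{\vee} \in D(M/\tau)$ that pairs nontrivially with a generating cycle $\alpha \in M/\tau$ under $ev$, so $ev/\tau$ is nonzero (hence bijective) on the one-dimensional homology groups. (Equivalently, the natural isomorphism $H(D(M/\tau);x) \cong \Hom_{\F}(H(M/\tau;x),\F)$ identifies the induced evaluation pairing on Margolis homology with the tautological perfect pairing, which makes the nondegeneracy automatic; your explicit construction is a concrete version of this fact.) So you have the same proof with a gap in the exposition properly filled.
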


\section{The Picard group of $\A(1)$}

\begin{de} \label{de:joker}
Let $J$ be the $\A(1)$-module on two generators $x$ and $y$
of degrees $(0,0)$ and $(2,0)$ respectively,
subject to the relations
$\Sq^2 x = \tau y$,
$\Sq^1 \Sq^2 \Sq^1 x = \Sq^2 y$, and $\Sq^1 y = 0$.
\end{de}

Figure \ref{fig:joker} represents $J$ as an $\A(1)$-module.

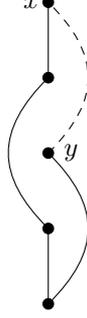
\begin{figure} 
\begin{tikzpicture}[line cap=round,line join=round,>=triangle 45,x=1.0cm,y=1.0cm]

\filldraw [black] (0,0) circle (0.07)
                  (0,1) circle (0.07)
                  (0,2) circle (0.07)
                  (0,3) circle (0.07)
                  (0,4) circle (0.07);
\draw (0,0) -- (0,1)
      (0,3) -- (0,4);
\draw 
      (0,0) .. controls (0.7,0.7) and (0.7,1.3) .. (0,2)
      (0,1) .. controls (-0.7,1.7) and (-0.7,2.3) .. (0,3);
\draw [dashed] (0,2) .. controls (0.7,2.7) and (0.7,3.3) .. (0,4);
\draw (0.3,2) node {$y$};
\draw (0,4) node[anchor=east] {$x$};
\end{tikzpicture}
\caption{The $\A(1)$-module $J$}
\label{fig:joker}
\end{figure}

\begin{lemma}
The $\A(1)$-module $J$ is invertible,
and the order of $[J]$ in $\Pic(\A(1))$ is infinite.
\end{lemma}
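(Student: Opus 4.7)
The plan is to prove invertibility and infinite order separately, using the machinery already established. Invertibility of $J$ will follow from a direct computation of the three Margolis homologies of $J/\tau$ together with Proposition \ref{pro:Margolis-invertible}, while the infinite order will follow by extracting a bidegree invariant from one of these Margolis homologies.

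For invertibility, I would first record the five $\M$-basis elements of $J$, namely $x$, $\Sq^1 x$, $y$, $\Sq^2 \Sq^1 x$, and $\Sq^2 y = \Sq^1 \Sq^2 \Sq^1 x$, sitting in bidegrees $(0,0)$, $(1,0)$, $(2,0)$, $(3,1)$, and $(4,1)$ respectively. After reduction modulo $\tau$, the defining relation $\Sq^2 x = \tau y$ collapses to $\Sq^2 x = 0$, while the other two relations persist. A brief inspection then shows that $H(J/\tau; Q_0)$ and $H(J/\tau; Q_1)$ are each one-dimensional, spanned by $y$ in bidegree $(2,0)$, and that $H(J/\tau; \Sq^2)$ is one-dimensional, spanned by $x$ in bidegree $(0,0)$. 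Proposition \ref{pro:Margolis-invertible} then yields invertibility of $J$, with inverse $DJ$.

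For the infinite order of $[J]$, the strategy is to build a group homomorphism $d : \Pic(\A(1)) \lto \Z^2$. For any invertible $\A(1)$-module $M$, the bigraded $\F$-vector space $H(M/\tau; Q_0)$ is one-dimensional by Proposition \ref{pro:Margolis-invertible}, so it is concentrated in a unique bidegree $d(M) \in \Z^2$. This bidegree depends only on the stable class of $M$ by Corollary \ref{cor:margolisdetection} (a stable equivalence induces a bidegree-preserving isomorphism on Margolis homology), and it is additive under tensor product by the K\"unneth formula of Proposition \ref{pro:kunneth}. The resulting map $d$ is thus a well-defined group homomorphism. Since $d([J]) = (2,0)$ from the computation above and $d([\M]) = (0,0)$ because $H(\F; Q_0) = \F$ is concentrated in bidegree $(0,0)$, one has $d([J]^n) = (2n, 0)$, which is nonzero for every $n \geq 1$. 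Hence $[J]^n \neq [\M]$ in $\Pic(\A(1))$, and $[J]$ has infinite order.

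The only piece requiring genuine computation is the Margolis homology calculation for $J/\tau$; it is short but demands care in separating the relations that hold in $J$ from those that hold in $\A(1)/\tau$. The construction of the invariant $d$ and the subsequent deduction of infinite order are essentially formal once Proposition \ref{pro:kunneth}, Corollary \ref{cor:margolisdetection}, and Proposition \ref{pro:Margolis-invertible} are in hand.
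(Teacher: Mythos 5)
Your proof is correct and takes essentially the same route as the paper: invertibility via Proposition \ref{pro:Margolis-invertible} after computing the three Margolis homologies of $J/\tau$, and infinite order via the K\"unneth formula (Proposition \ref{pro:kunneth}) applied to bidegrees of Margolis homology. Your Margolis homology computation is correct: $H(J/\tau;Q_0)$ and $H(J/\tau;Q_1)$ are generated by $y$ in bidegree $(2,0)$ while $H(J/\tau;\Sq^2)$ is generated by $x$ in bidegree $(0,0)$. (The published text has these swapped, but the mislabeling does not affect the argument, since all that is used is that the two bidegrees differ.)

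The one small variation is the choice of invariant. The paper compares the bidegree of the $Q_0$-homology of $J^{\otimes n}$ to the bidegree of its $\Sq^2$-homology; since these differ by $(2n,0)$ for $J^{\otimes n}$ but coincide for $\M$, the tensor powers are never stably equivalent to $\M$. You instead track the absolute bidegree of $Q_0$-homology, packaging it as a homomorphism $d:\Pic(\A(1))\to\Z^2$ and observing $d([J]^n)=(2n,0)\ne(0,0)=d([\M])$. Both are correct. The paper's relative invariant has the minor advantage of being insensitive to bidegree shifts $\Sigma^{a,b}$, which makes it slightly more robust as a tool (it would still detect $J^{\otimes n}\not\cong\Sigma^{a,b}\M$), but for the literal statement here either version suffices. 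Your observation that well-definedness of $d$ rests on Corollary \ref{cor:margolisdetection} (a stable equivalence is represented by an honest degree-preserving module map, which then induces a bidegree-preserving isomorphism on Margolis homology) is a legitimate point worth making explicit.
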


\begin{proof}
Proposition \ref{pro:Margolis-invertible} implies that $J$ is invertible.
The $Q_0$-homology and $Q_1$-homology of $J/\tau$ are generated by $x$, 
while the $\Sq^2$-homology of $J/\tau$ is generated by $y$.

The degrees of $x$ and $y$ are different.
Therefore, the $\Sq^2$-homology and the $Q_0$-homology of any tensor power
$J^{\otimes n}$ of $J$ are in different degrees.
On the other hand, the $\Sq^2$-homology and the $Q_0$-homology of
$\M$ are in the same degree.
This shows that $J^{\otimes n}$ is not stably equivalent to $\M$.
\end{proof}

\begin{rk}
The classical joker is self-dual as an $\A(1)^{\cl}$-module.
Therefore, it represents an element of order two in
$\Pic(\A(1)^{\cl}$. 
On the other hand, Figure \ref{fig:joker} shows that
the motivic joker is not self-dual.
\end{rk}

\begin{thm}
There is an isomorphism 
$$ \Z^4 \lto \Pic(\A(1))$$
sending $(a,b,c,d)$ to the class of
$\Sigma^{a,b} \Omega^c J^d$.
\begin{proof}
Recall the homomorphism
$$V : \Pic(\A(1)) \lto \Pic(A/\tau)$$
from Proposition \ref{pro:picardmaps}.
Consider the composition
$$\Z^4 \lto \Pic(\A(1)) \stackrel{V}{\lto} \Pic(\A(1)/\tau)
\stackrel{\cong}{\lto} \Pic(\F [D_8]),$$
where the last isomorphism comes from Lemma \ref{lem:A(1)/tau}.

Recall from \cite[Theorem 5.4]{CT00} that the ungraded Picard group of 
$\F [D_8]$ is isomorphic to 
$\Z^2$, generated by $\Omega \F$ and a module $L$. 
If we add the motivic bigrading, then we obtain that
the graded Picard group $\Pic(\F [D_8])$ is isomorphic to $\Z^4$.

By direct computation, the composition sends the 
joker $J$ to $\Omega L$.  Therefore,
the composition is an isomorphism.  This shows that $V$ is surjective.
We already know that $V$ is injective
from Proposition \ref{pro:picardmaps}.
Therefore, $V$ is an isomorphism, so the map
$$ \Z^4 \lto \Pic(\A(1))$$ 
is an isomorphism as well.
\end{proof}
\end{thm}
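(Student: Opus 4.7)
The plan is to reduce the theorem to the ungraded Picard group computation of $\F[D_8]$ carried out in \cite{CT00}, using the injective homomorphism $V : \Pic(\A(1)) \lto \Pic(\A(1)/\tau)$ from Proposition \ref{pro:picardmaps} and the identification $\A(1)/\tau \cong \F[D_8]$ from Lemma \ref{lem:A(1)/tau}.

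First, I would define the candidate homomorphism $\varphi : \Z^4 \lto \Pic(\A(1))$ by $\varphi(a,b,c,d) = [\Sigma^{a,b}\Omega^c J^d]$. This is well-defined since $\Sigma^{a,b}$, $\Omega$, and $J$ all represent invertible classes (using Lemma \ref{lem:loop} for $\Omega$ and the immediately preceding lemma for $J$), and $\Pic(\A(1))$ is abelian. Then I would form the composition
$$\Z^4 \stackrel{\varphi}{\lto} \Pic(\A(1)) \stackrel{V}{\lto} \Pic(\A(1)/\tau) \stackrel{\cong}{\lto} \Pic(\F[D_8]),$$
and show that it is an isomorphism onto the bigraded Picard group of $\F[D_8]$.

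The key input is the computation of \cite[Theorem 5.4]{CT00}, which gives the ungraded Picard group of $\F[D_8]$ as $\Z^2$, generated by $\Omega \F$ and an exotic module $L$. Pulling the motivic bigrading through the identification of Lemma \ref{lem:A(1)/tau} adds two more copies of $\Z$, so the bigraded $\Pic(\A(1)/\tau)$ is isomorphic to $\Z^4$, with generators coming from the bigraded shifts $\Sigma^{1,0}\F$ and $\Sigma^{0,1}\F$, the loop $\Omega \F$, and $L$. To conclude surjectivity of the composition I would identify $V(\Sigma^{1,0}\M)$, $V(\Sigma^{0,1}\M)$, and $V(\Omega \M)$ with the three formal generators (which is immediate, since $V$ is strongly monoidal and preserves $\Omega$), and then check by direct inspection on the picture in Figure \ref{fig:joker} that $J/\tau$ is stably equivalent to $\Omega L$ (or some other integral generator of the $L$-factor). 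This identification can be made rigorous using Corollary \ref{cor:margolisdetection}: $J/\tau$ and $\Omega L$ should have matching one-dimensional Margolis homologies with respect to $Q_0$, $Q_1$, and $\Sq^2$ in the same bidegrees, which by Proposition \ref{pro:Margolis-invertible} characterizes them up to stable equivalence among invertibles.

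Once $V \circ \varphi$ is known to be an isomorphism $\Z^4 \stackrel{\cong}{\lto} \Pic(\F[D_8])$, the proof concludes formally: the composition being an isomorphism forces $V$ to be surjective, and combined with its injectivity from Proposition \ref{pro:picardmaps} it is an isomorphism; then $\varphi = V^{-1} \circ (V \circ \varphi)$ is an isomorphism as well. The main obstacle is the concrete identification of $[J/\tau]$ in $\Pic(\F[D_8])$, since this requires matching the motivic joker to the exotic generator $L$ of \cite{CT00}; I would handle this either by comparing explicit module pictures or, as noted, by invoking the Margolis-homology characterization of invertibles to avoid a head-on module isomorphism.
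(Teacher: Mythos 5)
Your proposal takes essentially the same route as the paper: postcompose with the injective homomorphism $V$ from Proposition \ref{pro:picardmaps}, identify $\Pic(\A(1)/\tau) \cong \Pic(\F[D_8]) \cong \Z^4$ via \cite{CT00} together with the motivic bigrading, and show the composite $\Z^4 \to \Pic(\F[D_8])$ is an isomorphism by identifying the image of $J$ with $\Omega L$; surjectivity and the known injectivity of $V$ then finish the argument.

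One caveat about your suggested shortcut: matching the bidegrees of the three Margolis homologies of $J/\tau$ and $\Omega L$ does not, via Proposition \ref{pro:Margolis-invertible}, characterize them up to stable equivalence. That proposition only detects whether a module is invertible, not which Picard class it represents, and Corollary \ref{cor:margolisdetection} requires an explicit map in hand before one can test it on Margolis homology. Indeed, classically the joker and the unit have their $Q_0$- and $Q_1$-homologies concentrated in the same degree (this is forced by $J^{\otimes 2} \cong \F$), yet they represent distinct elements of $\Pic(\A(1)^{\cl})$, so Margolis degrees alone cannot pin down a Picard class without a further argument. The direct comparison of module pictures that you also offer is the right way to close this step, and that is what the paper's ``direct computation'' refers to.
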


\bibliographystyle{alpha}
\bibliography{biblio}

\end{document}